\documentclass[]{amsproc}

\usepackage{amsmath, amsthm, amssymb, mathtools, mathrsfs, stmaryrd}
\usepackage{ascmac}
\usepackage{comment}
\usepackage{bm, bbm}
\allowdisplaybreaks

\usepackage{graphicx}
\usepackage[top=30mm, bottom=30mm, left=29mm, right=30mm]{geometry}

\usepackage{hyperref}

\usepackage{tikz}
\usetikzlibrary{intersections, calc, arrows.meta}

\usepackage{here}
\usepackage{time}
\usepackage[abbrev]{amsrefs}

\usepackage{xcolor}
\usepackage[capitalize,nameinlink,noabbrev,nosort]{cleveref}
\hypersetup{
	colorlinks=true,       
	linkcolor=brown,          
	citecolor=brown,        
	filecolor=brown,      
	urlcolor=brown,           
}

\makeatletter
\@namedef{subjclassname@2020}{\textup{2020} Mathematics Subject Classification}
\makeatother


\newtheorem*{theorem*}{\hspace{-6.3mm}\textbf{Theorem}}  

\newtheorem{theoremcounter}{Theorem Counter}[section]

\theoremstyle{remark}
\newtheorem{remark}{Remark}

\theoremstyle{definition}
\newtheorem{definition}[theoremcounter]{Definition}
\newtheorem{example}{Example}

\theoremstyle{plain}
\newtheorem{lemma}[theoremcounter]{Lemma}

\newtheorem{corollary}[theoremcounter]{Corollary}

\newtheorem{theorem}[theoremcounter]{Theorem}

\numberwithin{equation}{section}

\newcommand{\Z}{\mathbb{Z}}
\newcommand{\Q}{\mathbb{Q}}
\newcommand{\R}{\mathbb{R}}
\newcommand{\C}{\mathbb{C}}

\newcommand{\dd}{\mathrm{d}}
\newcommand{\bbH}{\mathbb{H}}

\DeclareMathOperator{\ImNew}{Im}
\renewcommand{\Im}{\ImNew}
\DeclareMathOperator{\ReNew}{Re}
\renewcommand{\Re}{\ReNew}

\DeclareMathOperator{\SL}{SL}
\DeclareMathOperator{\PSL}{PSL}

\DeclareMathOperator{\sgn}{sgn}

\newcommand{\pmat}[1]{\begin{pmatrix}#1\end{pmatrix}}

\newcommand{\smat}[1]{\bigl(\begin{smallmatrix}#1\end{smallmatrix}\bigr)}

%
%


\begin{document}

\title[]{A unified approach to Rohrlich-type divisor sums}

\author{Daeyeol Jeon}
\address{Department of Mathematics Education, Kongju National University, Kongju, 314-701, Republic of Korea}
\email{dyjeon@kongju.ac.kr}

\author{Soon-Yi Kang}
\address{Department of Mathematics, Kangwon National University, Chuncheon, 200-701, Republic of Korea}
\email{sy2kang@kangwon.ac.kr}

\author{Chang Heon Kim}
\address{Department of Mathematics, Sungkyunkwan University, Suwon, 16419, Republic of Korea}
\email{chhkim@skku.edu}

\author{Toshiki Matsusaka}
\address{Faculty of Mathematics, Kyushu University, Motooka 744, Nishi-ku, Fukuoka 819-0395, Japan}
\email{matsusaka@math.kyushu-u.ac.jp}

\thanks{The first author was supported by the Research year project of the Kongju National University in 2023, 
the second author was supported by the National Research Foundation of Korea (NRF) funded by the Ministry of Education (NRF-2022R1A2C1007188), and 
the third author was supported by the National Research Foundation of Korea(NRF) grant funded by the Korea government (MSIT) (RS-2024-00348504).
The fourth author was supported by the MEXT Initiative through Kyushu University's Diversity and Super Global Training Program for Female and Young Faculty (SENTAN-Q) and JSPS KAKENHI (JP21K18141 and JP24K16901).}

\subjclass[2020]{Primary 11F25; Secondary 11F37}



\maketitle

\begin{abstract}
	We propose a systematic method for analyzing Rohrlich-type divisor sums for arbitrary congruence subgroups $\Gamma_0(N)$. Our main theorem unifies various results from the literature, and its significance is illustrated through the following five applications: (1) the valence formula, (2) a natural generalization of classical Rohrlich's formula to level $N$, (3) an explicit version of the theorem by Bringmann--Kane--L\"{o}brich--Ono--Rolen, (4) an extension of the generalized Rohrlich formula proposed by Bringmann--Kane, and (5) an alternative proof of the decomposition formula for twisted traces of CM values of weight 0 Eisenstein series.
\end{abstract}


\section{Introduction}

Let $j(\tau)$ denote the classical elliptic modular invariant, a holomorphic function on the upper-half plane $\bbH \coloneqq \{\tau = u+iv\in \C : u,v \in \R, v > 0\}$ that is invariant under the action of $\SL_2(\Z)$ and has a simple pole at the cusp $i\infty$. For any positive integer $n$, let $j_n(\tau)$ be the $n$-th element of the Hecke system studied by Asai--Kaneko--Ninomiya~\cite{AsaiKanekoNinomiya1997}, which is a polynomial in $j(\tau)$ normalized so that the non-positive power terms in its Fourier expansion is $q^{-n} + 24 \sigma_1(n)$ with $q \coloneqq e^{2\pi i\tau}$. The first few terms are given by
\begin{align*}
	j_1(\tau) &= j(\tau) - 720 = q^{-1} + 24 + 196884 q + 21493760 q^2 + \cdots,\\
	j_2(\tau) &= j(\tau)^2 - 1488 j(\tau) + 159840 = q^{-2} + 72 + 42987520 q + 40491909396 q^2 + \cdots.
\end{align*}
More generally, for an integer $k \in \Z$, a meromorphic function on $\bbH$ and at all cusps that satisfies $(f|_k\gamma)(\tau) \coloneqq (c\tau+d)^{-k} f(\gamma \tau) = f(\tau)$ for any $\gamma = \smat{a & b \\ c & d} \in \SL_2(\Z)$ is called a meromorphic modular form of weight $k$. The following result is established by Bruinier--Kohnen--Ono~\cite[Theorem 1]{BruinierKohnenOno2004}.

\begin{theorem*}
	Let $\Theta \coloneqq \frac{1}{2\pi i} \frac{\dd}{\dd \tau}$. For a positive integer $n > 0$ and a weight $k$ meromorphic modular form $f$ on $\SL_2(\Z)$ that has neither a zero nor a pole at the cusp $i\infty$, we have
	\[
		\sum_{z \in \SL_2(\Z) \backslash \bbH} \frac{\mathrm{ord}_z(f)}{\# \PSL_2(\Z)_z} j_n(z) = -\mathrm{Coeff}_{q^n} \left(\frac{\Theta f}{f}\right),
	\]
	where $\mathrm{ord}_z(f)$ is the order of $f$ at $\tau = z$ and $\mathrm{Coeff}_{q^n}(f)$ denotes the $n$-th Fourier coefficient of $f(\tau)$.
\end{theorem*}

As they point out, this theorem offers a valuable connection between the values of $j$-function and the arithmetic properties of the Fourier coefficients of modular forms, suggesting various significant implications. In this context, the sums of special values on divisors of meromorphic modular forms have been explored in numerous situations beyond the $j$-function. For instance, replacing $j_n(\tau)$ with the constant function 1 leads to the valence formula, which is instrumental in determining the dimension of the space of modular forms. In 1984, Rohrlich~\cite{Rohrlich1984} examined the divisor sums of the Kronecker limit function $\log(v^6 |\Delta(\tau)|)$, where $\Delta(\tau) \coloneqq q \prod_{n=1}^\infty (1-q^n)^{24}$, in search of analogies to Jensen's formula from complex analysis within the framework of modular forms. In reference to this classical work of Rohrlich, we will refer to sums of this form as Rohrlich-type divisor sums. More recently, this work has been generalized by Bringmann--Kane~\cite{BringmannKane2020} to sesqui-harmonic Maass forms associated with the $j$-function.

The aim of this article is to establish a fundamental theorem that unifies the proof of these work on Rohrlich-type divisor sums, while also generalizing and refining them for arbitrary congruence subgroups $\Gamma_0(N)$. To state the claim, we first introduce some notations. Let $\mathcal{C}_N \coloneqq \Gamma_0(N) \backslash (\Q \cup \{i\infty\})$ be the set of cusps of $\Gamma_0(N)$. For each cusp $\rho$, let $\sigma_\rho \in \SL_2(\Z)$ be a scaling matrix and $\ell_\rho$ be its width as defined in \cref{sec:punctured}. For an integer $k \in \Z$ and a smooth function $f: \bbH \to \C$, we define differential operators
\begin{align*}
	\mathcal{D}_k f (\tau) &\coloneqq \frac{\Theta f(\tau)}{f(\tau)} - \frac{k}{4\pi v},\\
	\xi_k f(\tau) &\coloneqq 2i v^k \overline{\frac{\partial}{\partial \overline{\tau}} f(\tau)}, 
\end{align*}
and $\Delta_k \coloneqq -\xi_{2-k} \circ \xi_k$. It is known that if $f$ satisfies $f|_k \gamma = f$ for $\gamma \in \SL_2(\Z)$, then $(\mathcal{D}_k f)|_2 \gamma = \mathcal{D}_k f$ and $(\xi_k f)|_{2-k} \gamma = \xi_k f$ hold, (see~\cite[Chapter 5]{BFOR2017}). Furthermore, for two weight 0 modular forms $f$ and $g$, let $\langle f, g \rangle^{\mathrm{reg}}$ denote the regularized Petersson inner product defined in \cref{def:reg-inner}. The following is our main theorem.

\begin{theorem}\label{thm:main}
For a meromorphic modular form $f$ of weight $k$ and a smooth modular form $g$ of weight $2$ (without singularities on $\bbH$) on $\Gamma_0(N)$, we have that
\begin{align*}
	\langle \xi_2 \Delta_2 g, &\log(v^{k/2} |f|) \rangle^\mathrm{reg} = -2\pi \sum_{z \in \Gamma_0(N) \backslash \bbH} \frac{\mathrm{ord}_z(f)}{\omega_z} \xi_2 g(z) + \sum_{\rho \in \mathcal{C}_N} \bigg(C_{1,\rho}(f,g) + \overline{C_{2,\rho}(f,g)} \bigg),
\end{align*}
if the limits $C_{1, \rho}(f,g), C_{2, \rho}(f,g)$ defined by
\begin{align*}
	C_{1,\rho} (f,g) &\coloneqq -2\pi \lim_{y \to \infty} \int_{iy}^{\ell_\rho+iy} (\mathcal{D}_kf)|_2 \sigma_\rho(\tau) \cdot (\xi_2 g)|_0 \sigma_\rho (\tau) \dd \tau,\\
	C_{2,\rho} (f,g) &\coloneqq \lim_{y \to \infty} \int_{iy}^{\ell_\rho+iy} \left(\log(v^{k/2}|(f|_k \sigma_\rho)(\tau)|) \cdot (\Delta_2 g)|_2\sigma_\rho (\tau) + \frac{k}{2} (g|_2\sigma_\rho) (\tau) \right) \dd \tau
\end{align*}
exist. Here, we set $\omega_z \coloneqq \#\overline{\Gamma_0(N)}_z$ and $\overline{\Gamma_0(N)} \coloneqq \Gamma_0(N)/\{\pm I\}$.
\end{theorem}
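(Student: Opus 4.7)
The plan is to realize the integrand of the regularized inner product as an exact $2$-form on $\bbH$ off the divisor of $f$, and then invoke Stokes' theorem on a fundamental domain of $\Gamma_0(N)\backslash\bbH$ with small disks around divisor points of $f$ and horocyclic neighborhoods of the cusps removed. The algebraic heart is a direct calculation: writing $L \coloneqq \log(v^{k/2}|f|)$, one verifies $\partial_\tau L = \pi i\,\mathcal{D}_k f$ and $\partial_{\bar\tau}\mathcal{D}_k f = ik/(8\pi v^2)$ away from the divisor of $f$. Starting from $\xi_2\Delta_2 g\cdot L\,\dd\mu = -L\,\dd(\overline{\Delta_2 g}\,\dd\bar\tau)$, a first integration by parts combined with the identity $\overline{\Delta_2 g} = 2i\,\partial_{\bar\tau}\xi_2 g$ converts the remainder into $2\pi\,\mathcal{D}_k f\,\dd(\xi_2 g\,\dd\tau)$; a second integration by parts produces a residual $\tfrac{k}{2}\xi_2 g\,\dd\mu$, which is itself exact via $\xi_2 g\,\dd\mu = -\dd(\overline g\,\dd\bar\tau)$. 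Collecting everything yields
\[
	\xi_2\Delta_2 g\cdot L\,\dd\mu = \dd\alpha, \qquad \alpha \coloneqq 2\pi\,\mathcal{D}_k f\cdot\xi_2 g\,\dd\tau - \Bigl(L\,\overline{\Delta_2 g} + \tfrac{k}{2}\,\overline g\Bigr)\dd\bar\tau.
\]

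Fix a fundamental domain $\mathcal{F}\subset\bbH$ for $\Gamma_0(N)$, and let $\mathcal{F}_{T,\varepsilon}$ be obtained by excising a disk of radius $\varepsilon$ from $\mathcal{F}$ around each $z\in\mathcal{F}$ with $\mathrm{ord}_z(f)\neq 0$ and by truncating each cusp $\rho$ at height $T$ in the local coordinate $\sigma_\rho^{-1}\tau$. Stokes' theorem gives $\int_{\mathcal{F}_{T,\varepsilon}}\xi_2\Delta_2 g\cdot L\,\dd\mu = \int_{\partial\mathcal{F}_{T,\varepsilon}}\alpha$. The boundary decomposes into: (i) clockwise arcs around divisor points of $f$; (ii) horizontal segments at truncated cusps, traversed right-to-left in the local coordinate by the standard orientation; (iii) the remaining geodesic sides of $\mathcal{F}$, whose contributions cancel in pairs because each summand of $\alpha$ is a $\Gamma_0(N)$-invariant $1$-form (for any weight-$2$ modular form $h$, both $h\,\dd\tau$ and $\overline h\,\dd\bar\tau$ are invariant under the slash action, while $L$ is weight-$0$ invariant).

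The arc around a divisor point $z$ of order $m = \mathrm{ord}_z(f)$ picks up a contribution only from the simple pole of $\mathcal{D}_k f$ at $z$, whose residue equals $m/(2\pi i)$. Because $\mathcal{F}$ occupies an angular sector of opening $2\pi/\omega_z$ at $z$, the clockwise arc integral of $\mathcal{D}_k f\cdot\xi_2 g\,\dd\tau$ tends to $-(2\pi i/\omega_z)\cdot m\cdot\xi_2 g(z)/(2\pi i) = -m\,\xi_2 g(z)/\omega_z$ as $\varepsilon\to 0$, while the $\dd\bar\tau$-part of $\alpha$ contributes only $O(\varepsilon\log\varepsilon)$ there. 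Multiplying by the coefficient $2\pi$ and summing over $z$ produces $-2\pi\sum_z (\mathrm{ord}_z(f)/\omega_z)\,\xi_2 g(z)$. At each cusp $\rho$, pulling back by $\sigma_\rho$ and using that $h\,\dd\tau$ is invariant whenever $h$ transforms with weight $2$, the horizontal-segment contribution from the $\dd\tau$-part of $\alpha$ becomes exactly $C_{1,\rho}(f,g)$ in the limit $T\to\infty$ (the sign coming from the right-to-left orientation). The $\dd\bar\tau$-part, because $L$ is real and $\dd\bar\tau = \overline{\dd\tau}$ along a horizontal line, yields $\overline{C_{2,\rho}(f,g)}$.

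The main technical hurdle is matching the definition of $\langle\cdot,\cdot\rangle^{\mathrm{reg}}$ from \cref{def:reg-inner} with the iterated limit $\lim_{T\to\infty}\lim_{\varepsilon\to 0}$ delivered by Stokes' theorem, and verifying that the assumed existence of $C_{1,\rho}(f,g)$ and $C_{2,\rho}(f,g)$ controls the cuspidal integrals so that both limits can be taken consistently; once this bookkeeping is carried out, the three contributions above assemble into the stated identity.
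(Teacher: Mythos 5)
Your proposal is correct and follows essentially the same route as the paper: the exact $1$-form $\alpha$ you write down is precisely the one in \cref{lem:Stokes}, and the subsequent boundary analysis (cancellation of the geodesic sides by weight-$2$ invariance, residue computation with angular weight $1/\omega_z$ at divisor points, identification of the $\dd\tau$- and $\dd\bar\tau$-parts of the horocyclic integrals with $C_{1,\rho}$ and $\overline{C_{2,\rho}}$ using that $\log(v^{k/2}|f|)$ is real) matches the paper's proof step for step. The limit-interchange issue you flag at the end is handled in the paper simply by taking $y$ large enough that all divisor points lie in $\mathcal{F}_0^{\varepsilon,y}(N)$, so the two limits decouple.
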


The significance lies not in the theorem itself, but in its consequences. As proven in \cref{sec:Reg-inner} and \cref{sec:proof}, this theorem is established using Stokes' theorem. In \cref{sec:Applications}, we present five applications. As the simplest of these, in \cref{sec:cor1}, we show that the valence formula follows by taking the weight 2 Eisenstein series as $g(\tau)$. In \cref{sec:Rohrlich}, we derive a natural generalization of Rohrlich's formula to level $N$ using a polyharmonic Maass form based on the real analytic Eisenstein series $E_{N,2}(\tau,s)$. In \cref{sec:cor3}, we extend the aforementioned result by Bruinier--Kohnen--Ono to level $N$ through a sesqui-harmonic Maass form derived from the Maass--Poincar\'{e} series. Although this extension has been addressed by Bringmann--Kane--L\"{o}brich--Ono--Rolen~\cite{BKLOR2018}, their Theorem 1.3 only refers to the claim in a form that allows for an error concerning the space of cusp forms, whereas we provide a more explicit equality. In \cref{sec:cor4}, we extend Bringmann--Kane's generalized Rohrlich formula to level $N$. Their proof relies on the fact that $\SL_2(\Z)$ has only one cusp. However, we present a result for general $\Gamma_0(N)$ that does not depend on the number of cusps and its genus. Finally, in \cref{sec:cor5}, we apply our theorem to the Eisenstein series $E_{1,2}(\tau,s)$ with complex variable $s$. By combining results on generalized Borcherds products, we offer a more concise alternative proof of the decomposition formula for the twisted traces of CM values of the weight 0 Eisenstein series, as given by Duke--Imamo\={g}lu--T\'{o}th~\cite{DukeImamogluToth2011} and Kaneko--Mizuno~\cite{KanekoMizuno2020}. Each of these earlier works will be explained in more detail in the corresponding sections.

\section{Regularized Petersson inner product}\label{sec:Reg-inner}

\subsection{Punctured fundamental domain}\label{sec:punctured}

To clarify the theorem's statement, we introduce the regularized Petersson inner product based on the definition by Bringmann--Kane~\cite[Section 6]{BringmannKane2020}. For the congruence subgroup $\Gamma_0(N) \coloneqq \{ \gamma = \smat{a & b \\ c & d} \in \SL_2(\Z) : c \equiv 0 \pmod{N}\}$ with $N \ge 1$, we let $\mathcal{F}_0^*(N)$ denote a closed fundamental domain for the action of $\Gamma_0(N)$ on $\bbH^* \coloneqq \bbH \cup \Q \cup \{i\infty\}$ via the M\"{o}bius transformation $\gamma \tau \coloneqq \frac{a\tau + b}{c\tau + d}$. For each cusp $\rho \in \mathbb{Q} \cup \{i\infty\}$, let $\sigma_\rho \in \SL_2(\Z)$ be a scaling matrix satisfying $\sigma_\rho i\infty = \rho$. Moreover, we can choose $\sigma_\rho$ such that $\sigma_\rho \sigma_{\rho'}^{-1} \in \Gamma_0(N)$ for any $\Gamma_0(N)$-equivalent pair $\rho, \rho'$. Then there exists $\ell_\rho \in \Z_{>0}$ such that the stabilizer subgroup $\Gamma_0(N)_\rho$ satisfies
\[
	\sigma_\rho^{-1} \Gamma_0(N)_\rho \sigma_\rho = \left\{\pm \pmat{1 & n\ell_\rho \\ 0 & 1} \in \SL_2(\Z) : n \in \Z\right\}.
\]
We refer to $\ell_\rho$ as the width of the cusp $\rho$. As is clear from the definition, we have $\ell_{i\infty} = 1$. We let $\{\rho_1, \dots, \rho_n\} = \mathcal{F}_0^*(N) \cap (\Q \cup \{i\infty\})$ be the set of cusps of $\mathcal{F}_0^*(N)$. It is important to note that distinct $\rho_j$'s may still be equivalent under the action of $\Gamma_0(N)$. For each cusp $\rho$ of $\mathcal{F}_0^*(N)$ and $y > 0$, we define an $y$-neighborhood $B_{\rho, y}$ as
\[
	B_{\rho, y} = \{\sigma_\rho \tau \in \bbH : \Im(\tau) > y\} \cap \mathcal{F}_0^*(N).
\]
In addition, for each point $z \in \bbH \cap \mathcal{F}_0^*(N)$ and $\varepsilon > 0$, we define an $\varepsilon$-neighborhood $B_{z, \varepsilon}$ by
\[
	B_{z, \varepsilon} \coloneqq \{\tau \in \bbH : \|\tau - z\| < \varepsilon\} \cap \mathcal{F}_0^*(N).
\]

For a finite subset $\{z_1, \dots, z_m\} \subset \bbH \cap \mathcal{F}_0^*(N)$, we define the punctured fundamental domain $\mathcal{F}_0^{\varepsilon, y} (N)$ by
\[
	\mathcal{F}_0^{\varepsilon, y} (N) \coloneqq \mathcal{F}_0^*(N) - \left(\bigcup_{j=1}^m B_{z_j, \varepsilon} \cup \bigcup_{j=1}^n B_{\rho_j, y} \right).
\]

\begin{example}
For the level 4 congruence subgroup $\Gamma_0(4)$, we can take a closed fundamental domain $\mathcal{F}_0^*(4)$ as described below. The set of its cusps is given by $\{i\infty, 0, 1/2, 1\}$, where $0$ and $1$ are $\Gamma_0(4)$-equivalent, while any other pair is not. We can take $\sigma_{i\infty} = I, \sigma_0 = \smat{0 & -1 \\ 1 & 0}, \sigma_{1/2} = \smat{1 & -1 \\ 2 & -1}, \sigma_1 = \smat{1 & -1 \\ 1 & 0}$ as the scaling matrices, with the cusp widths $\ell_{i\infty} = 1, \ell_0 = \ell_1 = 4, \ell_{1/2} = 1$. Note that we choose $\sigma_1 \sigma_0^{-1} = \smat{1 & 1 \\0 & 1} \in \Gamma_0(4)$.
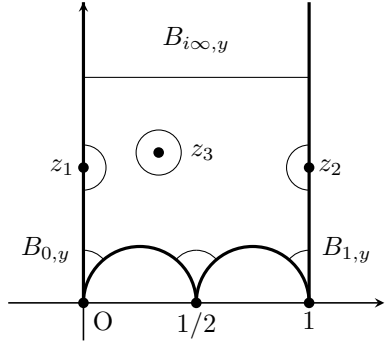
\begin{figure}[H]
\begin{center}
\begin{tikzpicture}
	\draw[->,>=stealth,semithick] (-1,0)--(4,0); 
	\draw[->,>=stealth,semithick] (0,-0.5)--(0,4); 
	\draw (0,0)node[below right]{O}; 
	\draw [very thick] (0,0)--(0,4); \draw [very thick] (3,0)--(3,4); 
	\draw [very thick] (0,0) arc (180:0:0.75); \draw [very thick] (1.5,0) arc (180:0:0.75);
	\draw (1.5,0) node[below]{$1/2$};
	\draw (3,0) node[below]{$1$};
	\fill (0,0) circle[radius=2pt]; \fill (1.5,0) circle[radius=2pt]; \fill (3,0) circle[radius=2pt]; 
	\draw (0,3)--(3,3); \draw (1.5,3.5) node{$B_{i\infty, y}$};
	\draw (0,0.7) arc (90:39:0.35); \draw (-0.5,0.7) node{$B_{0, y}$}; 
	\draw (3.5,0.7) node{$B_{1,y}$};
	\draw (1.5,0.7) arc (90:39:0.35); \draw (1.5,0.7) arc (90:141:0.35); \draw (3,0.7) arc (90:141:0.35);
	\draw (0, 1.5) arc (-90:90:0.3); \draw (3, 1.5) arc (270:90:0.3); 
	\fill (0,1.8) circle[radius=2pt]; \fill (3,1.8) circle[radius=2pt]; \draw (1,2) circle [radius=0.3]; \fill (1,2) circle[radius=2pt]; \draw (0, 1.8) node[left]{$z_1$}; \draw (3, 1.8) node[right]{$z_2$}; \draw (1.3,2) node[right]{$z_3$};
\end{tikzpicture}
\end{center}
\caption{A fundamental domain $\mathcal{F}_0^*(4)$ and neighborhoods for a finite set $\{z_1, z_2, z_3\}$.}
\label{fig:fundamental-domain}
\end{figure}
Through a direct calculation, we can explicitly express the equations for the boundary arc  $C_\rho$ of a $y$-neighborhood at each cusp $\rho$ as follows: $C_{i\infty}(t) = t+iy$ ($0 \le t \le 1$), $C_0(t) = \sigma_0 (t+iy)$ ($-2 \le t \le 0$) on the circle $X^2 + (Y- \frac{1}{2y})^2 = \frac{1}{4y^2}$, $C_{1/2}(t) = \sigma_{1/2} (t+iy)$ ($0 \le t \le 1$) on the circle $(X-\frac{1}{2})^2 + (Y- \frac{1}{8y})^2 = \frac{1}{64y^2}$, and $C_1(t) = \sigma_1 (t+iy)$ ($0 \le t \le 2$) on the circle $(X-1)^2 + (Y - \frac{1}{2y})^2 = \frac{1}{4y^2}$. The width of a cusp is equal to the sum of the lengths of the $t$-intervals corresponding to all equivalent cusps.
\end{example}

Under these notation, we can define the regularized inner product.

\begin{definition}\label{def:reg-inner}
	For two functions $f, g: \bbH \to \C$ satisfying $f(\gamma \tau) = f(\tau)$ and $g(\gamma \tau) = g(\tau)$ for any $\gamma \in \Gamma_0(N)$, we assume that the set of singularities $\{z_1, \dots, z_n\}$ of $f$ and $g$ on $\bbH \cap \mathcal{F}_0^*(N)$ is finite. Then, the regularized Petersson inner product is defined by
	\[
		\langle f, g \rangle^\mathrm{reg} \coloneqq \lim_{\varepsilon \to 0} \lim_{y \to \infty} \int_{\mathcal{F}_0^{\varepsilon, y}(N)} f(\tau) \overline{g(\tau)} \frac{\dd u \dd v}{v^2}
	\]
	if the limit exists.
\end{definition}

\subsection{Auxiliary calculations using Stokes' theorem}

Under the assumptions of \cref{thm:main}, the functions $\xi_2 \Delta_2 g(\tau)$ and $\log(v^{k/2} |f(\tau)|)$ are invariants under the action of $\Gamma_0(N)$. Furthermore, the set of singularities of these functions on $\bbH \cap \mathcal{F}_0^*(N)$ is a finite set consisting of the zeros and poles of $f$. Thus, we can consider their regularized Petersson inner product. To calculate it by using Stokes' theorem, we show the following lemma.

\begin{lemma}\label{lem:Stokes}
For a meromorphic modular form $f$ of weight $k$ and a smooth modular form $g$ of weight $2$ (without singularities on $\bbH$) on $\Gamma_0(N)$, we have
\begin{align*}
	&(\xi_2 \Delta_2 g(\tau)) \cdot \log(v^{k/2} |f(\tau)|) \frac{\dd u \wedge \dd v}{v^2}\\
	&=\dd \bigg( 2\pi \mathcal{D}_k f(\tau) \cdot \xi_2 g(\tau) \dd \tau + \left(-\log(v^{k/2}|f(\tau)|) \cdot \overline{\Delta_2 g(\tau)} - \frac{k}{2} \overline{g(\tau)} \right) \dd \overline{\tau} \bigg).
\end{align*}
\end{lemma}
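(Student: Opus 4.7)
My plan is to prove this by a direct computation, comparing both sides as $2$-forms in Wirtinger coordinates $(\tau,\bar\tau)$. Writing the $1$-form inside $\dd(\cdot)$ as $\omega = A\,\dd\tau + B\,\dd\bar\tau$ with
\[
A = 2\pi\,\mathcal{D}_k f \cdot \xi_2 g, \qquad B = -\log(v^{k/2}|f|)\cdot \overline{\Delta_2 g} - \frac{k}{2}\bar g,
\]
we have $\dd\omega = (\partial_\tau B - \partial_{\bar\tau} A)\,\dd\tau\wedge\dd\bar\tau$. Using $\dd u\wedge\dd v = \tfrac{i}{2}\,\dd\tau\wedge\dd\bar\tau$, the lemma reduces to the scalar identity
\[
\partial_\tau B - \partial_{\bar\tau} A \;=\; \frac{i}{2v^2}\,\log(v^{k/2}|f|)\cdot \xi_2 \Delta_2 g
\]
on $\bbH$ away from the divisor of $f$.

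The key auxiliary identity I would establish first is
\[
\partial_\tau \log(v^{k/2}|f|) \;=\; \pi i\,\mathcal{D}_k f,
\]
which follows at once from $\partial_\tau v = -i/2$ and $\partial_\tau \log(f\bar f) = \partial_\tau f/f = 2\pi i\,\Theta f/f$, the antiholomorphic piece vanishing by meromorphy of $f$. Together with the cognate fact $\partial_{\bar\tau}(\Theta f/f)\equiv 0$ on $\bbH\setminus\mathrm{div}(f)$ and the short computation $\partial_{\bar\tau}\mathcal{D}_k f = ik/(8\pi v^2)$, Leibniz's rule then gives
\begin{align*}
\partial_{\bar\tau} A &= \frac{ik}{4v^2}\,\xi_2 g \;+\; 2\pi\,\mathcal{D}_k f\cdot \partial_{\bar\tau}\xi_2 g,\\
\partial_\tau B &= -\pi i\,\mathcal{D}_k f\cdot \overline{\Delta_2 g} \;-\; \log(v^{k/2}|f|)\cdot\partial_\tau\overline{\Delta_2 g} \;-\; \frac{k}{2}\,\partial_\tau \bar g.
\end{align*}

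Forming $\partial_\tau B - \partial_{\bar\tau} A$, the proof collapses to three cancellations that I would verify one at a time. First, the $\mathcal{D}_k f$-terms cancel because $\overline{\Delta_2 g} = -\overline{\xi_0\xi_2 g} = 2i\,\partial_{\bar\tau}(\xi_2 g)$, turning $-\pi i\,\mathcal{D}_k f\cdot\overline{\Delta_2 g}$ into exactly $2\pi\,\mathcal{D}_k f\cdot\partial_{\bar\tau}\xi_2 g$. Second, the coefficient of $\log(v^{k/2}|f|)$ is $-\partial_\tau\overline{\Delta_2 g}$, and since $\xi_2\Delta_2 g = 2iv^2\,\overline{\partial_{\bar\tau}\Delta_2 g} = 2iv^2\,\partial_\tau\overline{\Delta_2 g}$, this equals $\tfrac{i}{2v^2}\xi_2\Delta_2 g$ on the nose. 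Third, the leftover $-\tfrac{k}{2}\partial_\tau\bar g$ cancels against $-\tfrac{ik}{4v^2}\xi_2 g$ using $\xi_2 g = 2iv^2\,\overline{\partial_{\bar\tau}g} = 2iv^2\,\partial_\tau\bar g$.

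The hard part here is not conceptual but purely notational: one must track Wirtinger conventions, apply $\overline{\partial_{\bar\tau}h} = \partial_\tau\bar h$ consistently through nested operators like $\Delta_2 = -\xi_0\circ\xi_2$, and remember that the vanishing of $\partial_{\bar\tau}(\Theta f/f)$ only holds on $\bbH\setminus\mathrm{div}(f)$. This last point is exactly what forces the subsequent application of Stokes' theorem to be carried out on the punctured fundamental domain $\mathcal{F}_0^{\varepsilon,y}(N)$ rather than on $\mathcal{F}_0^*(N)$, and so the identity of forms stated here is the correct local statement to feed into the global argument.
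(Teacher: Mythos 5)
Your proposal is correct and follows essentially the same route as the paper: a direct computation of the exterior derivative via Wirtinger derivatives, resting on the same key identities $\partial_\tau \log(v^{k/2}|f|) = \pi i\,\mathcal{D}_k f$, $\partial_{\bar\tau}\mathcal{D}_k f = ik/(8\pi v^2)$, and the translation of $\xi_2$, $\Delta_2$ into $\partial_\tau$, $\partial_{\bar\tau}$. The only difference is cosmetic (you keep everything in $\dd\tau\wedge\dd\bar\tau$ and convert to $\dd u\wedge\dd v$ at the end, whereas the paper converts term by term), and your closing remark about why the identity only holds off the divisor of $f$ is exactly the point the paper exploits when applying Stokes' theorem on the punctured domain.
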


\begin{proof}
By our assumption that $f$ is meromorphic, we have
\[
	\frac{\partial}{\partial \overline{\tau}} \mathcal{D}_k f(\tau) = - \frac{k}{4\pi} \cdot \frac{1}{2} \left(\frac{\partial}{\partial u} + i \frac{\partial}{\partial v}\right) \frac{1}{v} = - \frac{k}{4\pi} \frac{1}{2i} \frac{1}{v^2}.
\]
Since $\dd (f \dd \tau) = \frac{\partial}{\partial \overline{\tau}} f \dd \overline{\tau} \wedge \dd \tau = 2i \frac{\partial}{\partial \overline{\tau}} f \dd u \wedge \dd v$ and
\[
	2i \frac{\partial}{\partial \overline{\tau}} \xi_2 g(\tau) = \overline{-2i \overline{\frac{\partial}{\partial \overline{\tau}} \xi_2 g(\tau)}} = \overline{-\xi_0 \xi_2 g(\tau)} = \overline{\Delta_2 g(\tau)},
\]
the first term of the right-hand side is given as
\begin{align}\label{eq:lem-1}
	\dd \bigg(2\pi \mathcal{D}_k f(\tau) \cdot \xi_2 g(\tau) \dd \tau \bigg) &= \left(-\frac{k}{2} \frac{1}{v^2} \cdot \xi_2 g(\tau) + 2\pi \mathcal{D}_k f(\tau) \cdot \overline{\Delta_2 g(\tau)} \right) \dd u \wedge \dd v.
\end{align}
Similarly, for the second term, we apply $\dd(f \dd \overline{\tau}) = \frac{\partial}{\partial \tau} f \dd \tau \wedge \dd \overline{\tau} = -2i \frac{\partial}{\partial \tau} f \dd u \wedge \dd v$,
\begin{align}\label{eq:log-der}
	-2i \frac{\partial}{\partial \tau} \log(v^{k/2} |f(\tau)|) &= -2i \frac{\partial}{\partial \tau} \left(\frac{k}{2} \log v + \frac{1}{2} \log f(\tau) + \frac{1}{2} \log \overline{f(\tau)}\right) = 2\pi \mathcal{D}_k f(\tau),
\end{align}
and
\[
	-2i \frac{\partial}{\partial \tau} \overline{\Delta_2 g(\tau)} = -2i \overline{\frac{\partial}{\partial \overline{\tau}} \Delta_2 g(\tau)} = - \frac{1}{v^2} \xi_2 \Delta_2 g(\tau)
\]
to have
\begin{align}\label{eq:lem-2}
\begin{split}
	&\dd \left(-\log(v^{k/2}|f(\tau)|) \cdot \overline{\Delta_2 g(\tau)} - \frac{k}{2} \overline{g(\tau)} \right) \dd \overline{\tau}\\
	&= \left( -2\pi \mathcal{D}_k f(\tau) \cdot \overline{\Delta_2 g(\tau)} + \frac{1}{v^2} \log(v^{k/2}|f(\tau)|) \cdot \xi_2 \Delta_2 g(\tau) + \frac{k}{2} \frac{1}{v^2} \cdot \xi_2 g(\tau) \right) \dd u \wedge \dd v.
\end{split}
\end{align}
By summing \eqref{eq:lem-1} and \eqref{eq:lem-2}, we obtain the desired result.
\end{proof}

By \cref{lem:Stokes} and Stokes' theorem, we obtain
\begin{align}\label{eq:Stokes}
\begin{split}
	&\int_{\mathcal{F}_0^{\varepsilon,y}(N)} (\xi_2 \Delta_2 g(\tau)) \cdot \log(v^{k/2} |f(\tau)|)\frac{\dd u \dd v}{v^2}\\
	&= \int_{\partial \mathcal{F}_0^{\varepsilon,y}(N)} 2\pi \mathcal{D}_k f(\tau) \cdot \xi_2 g(\tau) \dd \tau + \left(-\log(v^{k/2}|f(\tau)|) \cdot \overline{\Delta_2 g(\tau)} - \frac{k}{2} \overline{g(\tau)} \right) \dd \overline{\tau}.
\end{split}
\end{align}
If the limit as $\varepsilon \to 0$ and $y \to \infty$ exists, it yields the regularized inner product $\langle \xi_2 \Delta_2 g, \log(v^{k/2}|f|)\rangle^{\mathrm{reg}}$. Here, the boundary $\partial \mathcal{F}_0^{\varepsilon, y}(N)$ is oriented such that the interior is on the left-hand side. For clarity, referring to the example in \cref{fig:fundamental-domain}, the line segment $t+iy$, which is parallel to the real axis, is oriented from right to left, while the circle around $z_3$ is oriented clockwise.

\section{Proof of \cref{thm:main}}\label{sec:proof}

We evaluate the limit of the right-hand side of \eqref{eq:Stokes} as $\varepsilon \to 0$ and $y \to \infty$. Since the integrand functions are modular forms of weight 2, the integrals over the parts of the boundary of $\mathcal{F}_0^*(N)$ cancel out, leaving only the integrals over the boundaries $C_{\rho, y} \coloneqq \partial B_{\rho, y} \cap \partial \mathcal{F}_0^{\varepsilon, y}(N)$ and $C_{z, \varepsilon} \coloneqq \partial B_{z, \varepsilon} \cap \partial \mathcal{F}_0^{\varepsilon, y}(N)$. We note that similar calculation has been carried out in~\cite[Section 3]{Choi2010}.

\subsection{At singularities on $\bbH$}

Since we are considering the limit as $y \to \infty$, we can assume that $y$ is sufficiently large such that all zeros and poles of $f$ on $\mathcal{F}_0^*(N)$ are contained within $\mathcal{F}_0^{\varepsilon, y}(N)$. Around each zero or pole $z_j$ of $f$, the second integral
\[
	\int_{C_{z_j, \varepsilon}} \left(-\log(v^{k/2}|f(\tau)|) \cdot \overline{\Delta_2 g(\tau)} - \frac{k}{2} \overline{g(\tau)} \right) \dd \overline{\tau}
\]
tends to $0$ as $\varepsilon \to 0$. Indeed, since $g$ is a smooth function that does not have singularities on $\bbH$, it is clear that the limit of the integral of the second term is $0$. For the first term, since the singularities arise from zeros or poles of $f$, it is evaluated by
\[
	\int_{C_{z_j, \varepsilon}} \log |\tau - z_j| \dd \tau = O(\varepsilon \log \varepsilon)
\]
as $\varepsilon \to 0$.

Therefore, only the first integral on the right-hand side of \eqref{eq:Stokes} contributes. Suppose the Laurent expansion
\[
	\frac{\Theta f(\tau)}{f(\tau)} = \sum_{n=-1}^\infty a_n(z_j) (\tau-z_j)^n, \qquad a_{-1}(z_j) = \frac{1}{2\pi i} \mathrm{ord}_{z_j}(f).
\]
If $z_j$ is an interior point of $\mathcal{F}_0^*(N)$, then for a sufficiently small $\varepsilon > 0$, the boundary is contained within the interior of $\mathcal{F}_0^*(N)$, and thus the contribution is given by
\begin{align*}
	\int_{C_{z_j, \varepsilon}} 2\pi \cdot \frac{\Theta f(\tau)}{f(\tau)} \cdot \xi_2 g(\tau) \dd \tau &= - 2\pi \int_0^{2\pi} \sum_{n=-1}^\infty a_n(z_j) (\varepsilon e^{i\theta})^n \cdot \xi_2 g(z_j+\varepsilon e^{i\theta}) i\varepsilon e^{i\theta} \dd \theta\\
	&\xrightarrow{\varepsilon \to 0} -2\pi \mathrm{ord}_{z_j}(f) \xi_2 g(z_j).
\end{align*}
When $z_j$ is on the boundary of $\mathcal{F}_0^*(N)$, we let $[z_j]$ denote the set of all zeros and poles of $f$ in $\bbH \cap \mathcal{F}_0^*(N)$ that are $\Gamma_0(N)$-equivalent to $z_j$. By combining all the contributions on $[z_j]$, we obtain
\begin{align*}
	\lim_{\varepsilon \to 0} \sum_{z \in [z_j]} \int_{C_{z, \varepsilon}} 2\pi \cdot \frac{\Theta f(\tau)}{f(\tau)} \cdot \xi_2 g(\tau) \dd \tau = -2\pi \frac{\mathrm{ord}_{z_j}(f)}{\omega_{z_j}} \xi_2 g(z_j).
\end{align*}

\subsection{At cusps}

Similarly, for each cusp $\rho_j$ of $\mathcal{F}_0^*(N)$, we let $[\rho_j]$ denote the set of all cusps of $\mathcal{F}_0^*(N)$ that are $\Gamma_0(N)$-equivalent to $\rho_j$. Then we have
\begin{align*}
	&\sum_{\rho \in [\rho_j]} \int_{C_{\rho, y}} 2\pi \mathcal{D}_k f(\tau) \cdot \xi_2 g(\tau) \dd \tau + \left(-\log(v^{k/2}|f(\tau)|) \cdot \overline{\Delta_2 g(\tau)} - \frac{k}{2} \overline{g(\tau)} \right) \dd \overline{\tau}\\
	&= - 2\pi \int_{iy}^{\ell_{\rho_j}+iy} \mathcal{D}_k f(\sigma_{\rho_j} \tau) \cdot \xi_2 g(\sigma_{\rho_j} \tau) \dd \sigma_{\rho_j} \tau\\
		&\qquad + \int_{iy}^{\ell_{\rho_j}+iy} \left(\log(\Im(\sigma_{\rho_j} \tau)^{k/2}|f(\sigma_{\rho_j} \tau)|) \cdot \overline{\Delta_2 g(\sigma_{\rho_j} \tau)} + \frac{k}{2} \overline{g(\sigma_{\rho_j} \tau)} \right) \dd \overline{\sigma_{\rho_j} \tau},
\end{align*}
which tends to $C_{1,\rho_j}(f,g) + \overline{C_{2,\rho_j}(f,g)}$ as $y \to \infty$ if the limits exist. 

In conclusion, by summing all these contributions, we obtain \cref{thm:main}.

\section{Applications}\label{sec:Applications}

As applications of \cref{thm:main}, by taking weight 2 functions $g$ arising from the Maass--Poincar\'{e} series, we offer generalizations and alternative proofs for several results. We begin by recalling the Maass--Poincar\'{e} series.

\subsection{Maass--Poincar\'{e} series}

For $k \in 2\Z$ and $n \in \Z$, we define $\psi_{k, n}(\cdot, s): \R_{>0} \to \C$ by
\[
	\psi_{k, n}(v,s) = \begin{cases}
		\Gamma(2s)^{-1} (4\pi |n|v)^{-k/2} M_{\sgn(n)\frac{k}{2}, s-\frac{1}{2}} (4\pi|n|v) &\text{if } n \neq 0,\\
		v^{s-\frac{k}{2}} &\text{if } n =0,
	\end{cases}
\]
where $M_{\mu, \nu}(y)$ is the M-Whitteker function, (see~\cite[Section 6.9]{Erdelyi1953} and \cite[Chapter VII]{MOS1966}). For $\Re(s) > 1$, the weight $k$ Maass--Poincar\'{e} series $P_{N, k, n}(\tau, s)$ is defined by
\[
	P_{N,k,n}(\tau,s) \coloneqq \sum_{\gamma \in \Gamma_0(N)_\infty \backslash \Gamma_0(N)} \psi_{k, n}(v, s) e(nu) |_k \gamma,
\]
where $e(x) \coloneqq e^{2\pi inx}$. In particular, when $n=0$, the series $E_{N,k}(\tau,s) \coloneqq P_{N,k,0}(\tau,s)$ is called the Eisenstein series. Moreover, when $k = 0$ and $-n < 0$, noting that
\[
	M_{0, s - \frac{1}{2}}(4\pi n v) = \frac{2\pi \sqrt{n} \Gamma(2s)}{\Gamma(s)} v^{1/2} I_{s-1/2} (2\pi nv),
\]
where $I_s(v)$ is the $I$-Bessel function, (see~\cite[Section 7.2.4]{MOS1966}), we have the expression
\[
	F_{N,-n}(\tau, s) \coloneqq \Gamma(s) P_{N,0,-n}(\tau, s) = 2\pi \sqrt{n} \sum_{\gamma \in \Gamma_0(N)_\infty \backslash \Gamma_0(N)} v^{1/2} I_{s-1/2}(2\pi nv) e(-nu) |_0 \gamma.
\]
This is known as the Niebur--Poincar\'{e} series. The known properties of the Maass--Poincar\'{e} series are summarized in~\cite{JeonKangKim2013}. In the following applications, we focus on the cases $k=0$ and $k=2$. The properties described below for these special cases can be verified by combining Niebur's original article~\cite{Niebur1973}, the explanation in~\cite{BKLOR2018}, \cite{Iwaniec2002}, and \eqref{eq:MPoincare-xi} below. An important property here is that $P_{N,k,n}(\tau, s)$ admits a meromorphic continuation to $s=1$ and is holomorphic at $s=1$ unless $k=n=0$. In the case $k=n=0$, it has a simple pole at $s=1$ with residue
\[
	\mathrm{Res}_{s=1} E_{N,0}(\tau, s) = \frac{1}{\mathrm{vol}(\Gamma_0(N)\backslash \bbH)}.
\]

For each cusp $\rho$, the Maass--Poincar\'{e} series has the Fourier expansion of the form
\begin{align}\label{eq:Fourier}
	(P_{N,k,n}|_k \sigma_\rho) (\tau,s) = \delta_{\rho, i\infty} \psi_{k,n}(v,s) e(nu) + \varphi_\rho (s) v^{1-s-k/2} + \sum_{m \neq 0} c_{\rho,m}(v,s) e \left(\frac{mu}{\ell_\rho}\right),
\end{align}
where $c_{\rho,m}(v,s)$ decays exponentially as $v \to \infty$ (cf.~\cite[Section 3.4]{Iwaniec2002}). Based on the remark above, $c_{\rho,m}(v,s)$ are holomorphic at $s=1$, and $\varphi_\rho(s)$ has a simple pole at $s=1$ when $k=n=0$, but is holomorphic at $s=1$ in all other cases. Moreover, by a similar argument as in \cite[(3.1)]{Matsusaka2020}, we have
\begin{align}\label{eq:MPoincare-xi}
\begin{split}
	\xi_k P_{N,k,n}(\tau,s) = \begin{cases}
		(4\pi|n|)^{1-k} \left(\overline{s}-\frac{k}{2}\right) P_{N, 2-k, -n}(\tau, \overline{s}) &\text{if } n \neq 0,\\
		\left(\overline{s} - \frac{k}{2}\right) P_{N,2-k,0}(\tau,\overline{s}) &\text{if } n=0.
	\end{cases}
\end{split}
\end{align}

In the study of polyharmonic Maass forms~\cite{LagariasRhoades2016, Matsusaka2020}, it is effective to consider the Taylor (or Laurent) coefficients of these Poincar\'{e} series at $s=1$. We introduce the notation $\mathrm{Coeff}_{(s-1)^m} (P_{N,k,n}(\tau,s)) \coloneqq G_{N,k,n,m}(\tau)$ to represent the $m$-th coefficient of the Laurent expansion
\[
	P_{N,k,n}(\tau,s) = \sum_{m=-1}^\infty G_{N,k,n,m}(\tau) (s-1)^m.
\]
As mentioned above, we have $\mathrm{Coeff}_{(s-1)^{-1}} (P_{N,k,n}(\tau,s)) = 0$ except for the case $k=n=0$.

Now, by using the Maass--Poincar\'{e} series of weight 2, we provide the following five applications. Before proceeding, we state two facts that will be used repeatedly. 

\begin{lemma}\label{lem:log-Fourier}
Let $f$ be a meromorphic modular form of weight $k$ and $\rho$ be a cusp. For any $\tau \in \bbH$ with sufficiently large imaginary part, we have
\[
	\log(v^{k/2} |(f|_k \sigma_\rho)(\tau)|) = \frac{k}{2} \log v -2\pi \frac{\mathrm{ord}_\rho(f)}{\ell_\rho} v + a_{\rho, 0} + \sum_{m=1}^\infty (a_{\rho, m} q^{m/\ell_\rho} + \overline{a_{\rho, m} q^{m/\ell_\rho}})
\]
for some coefficients $a_{\rho,m}$, where $\mathrm{ord}_\rho(f)$ is defined by \eqref{eq:f-rho-Fourier} below. If $\mathrm{ord}_\rho(f) = 0$, the constant $a_{\rho,0}$ is equal to $\log|f(\rho)|$, with $f(\rho) = \lim_{\tau \to i\infty} (f|_k\sigma_\rho) (\tau)$.
\end{lemma}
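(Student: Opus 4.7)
The plan is to expand $(f|_k \sigma_\rho)(\tau)$ as a $q$-series at the cusp $\rho$, factor out the leading term, and convert the logarithm of the absolute value into a real-analytic expansion using the Taylor series of $\log(1+z)$.

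First, I would invoke the standard Fourier expansion at the cusp referenced in \eqref{eq:f-rho-Fourier}: since $f$ is a meromorphic modular form of weight $k$, the slashed function admits an expansion
\[
	(f|_k\sigma_\rho)(\tau) = b_0\, q^{n_\rho/\ell_\rho} \Bigl(1 + \sum_{m \geq 1} c_m\, q^{m/\ell_\rho}\Bigr),
\]
where $n_\rho = \mathrm{ord}_\rho(f)$, $b_0 \neq 0$ is the leading Fourier coefficient, and $c_m = b_m/b_0$; meromorphy of $f$ at $\rho$ is what ensures only finitely many negative $q$-powers appear, allowing us to pull out this leading term. Taking absolute values and logarithms, the factors $v^{k/2}$, $|q^{n_\rho/\ell_\rho}| = e^{-2\pi n_\rho v/\ell_\rho}$, and $|b_0|$ contribute exactly $\tfrac{k}{2}\log v$, $-2\pi \mathrm{ord}_\rho(f)\, v/\ell_\rho$, and $\log|b_0|$, respectively, matching the first three terms on the right-hand side.

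Next, I would treat the remaining factor $\log|1+h(\tau)|$ with $h(\tau) \coloneqq \sum_{m \geq 1} c_m q^{m/\ell_\rho}$. For $v$ sufficiently large one has $|h(\tau)| < 1$, so the principal branch of $\log(1+h)$ is well defined and
\[
	\log|1+h(\tau)| = \tfrac{1}{2}\log(1+h(\tau)) + \tfrac{1}{2}\,\overline{\log(1+h(\tau))}.
\]
Plugging the $q$-expansion of $h$ into the Taylor series $\log(1+z) = \sum_{n \geq 1} (-1)^{n-1} z^n/n$ and collecting powers of $q^{1/\ell_\rho}$ yields $\log(1+h(\tau)) = \sum_{m=1}^\infty 2a_{\rho,m}\, q^{m/\ell_\rho}$ for suitable constants $a_{\rho,m}$; the rearrangement is legitimate since for each fixed $m$ only the finitely many indices $n \leq m$ contribute. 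Substituting back into the previous display produces the sum $\sum_{m \geq 1}\bigl(a_{\rho,m}q^{m/\ell_\rho} + \overline{a_{\rho,m}q^{m/\ell_\rho}}\bigr)$ appearing in the statement.

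Finally, when $\mathrm{ord}_\rho(f) = 0$ the expansion begins at $m = 0$ with $b_0 = \lim_{\tau \to i\infty}(f|_k\sigma_\rho)(\tau) = f(\rho)$, hence $a_{\rho,0} = \log|b_0| = \log|f(\rho)|$, which is the last assertion. The only mildly delicate step is the reorganization of the double sum when substituting the $q$-series of $h$ into $\log(1+z)$, but this is routine once $|h(\tau)|<1$ is secured by taking $v$ large enough; everything else is bookkeeping.
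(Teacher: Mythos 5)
Your proposal is correct and follows essentially the same route as the paper: expand $(f|_k\sigma_\rho)(\tau)$ at the cusp as in \eqref{eq:f-rho-Fourier}, factor out the leading term to account for $\frac{k}{2}\log v - 2\pi\frac{\mathrm{ord}_\rho(f)}{\ell_\rho}v + \log|b_0|$, and handle the remaining factor via the series for $\log|1-z|$ with $|z|<1$ for $v$ large. The paper states this in one line, whereas you spell out the (routine) rearrangement of the double sum; there is no substantive difference.
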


\begin{proof}
It immediately follows from the Fourier expansion
\begin{align}\label{eq:f-rho-Fourier}
	(f|_k \sigma_\rho)(\tau) = \sum_{n = \mathrm{ord}_\rho(f)}^\infty c_{\rho, n} q^{n/\ell_\rho} \quad (c_{\rho, \mathrm{ord}_\rho(f)} \neq 0)
\end{align}
and the relation 
\[
	\log|1-z| = \frac{1}{2} (\log (1-z) + \log (1-\overline{z})) = -\frac{1}{2} \sum_{n=1}^\infty \frac{z^n + \overline{z}^n}{n}
\]
for $|z| < 1$.
\end{proof}

\begin{lemma}\label{lem:Dk-Fourier}
Let $f$ be a meromorphic modular form of weight $k$ and $\rho$ be a cusp. For any $\tau \in \bbH$ with sufficiently large imaginary part, we have
\[
	(\mathcal{D}_k f)|_2 \sigma_\rho(\tau) = \frac{\mathrm{ord}_\rho(f)}{\ell_\rho} + \sum_{m=1}^\infty a_{\rho, m} q^{m/\ell_\rho} - \frac{k}{4\pi v}
\]
for some coefficients $a_{\rho, m}$.
\end{lemma}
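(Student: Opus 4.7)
The plan is to bootstrap on \cref{lem:log-Fourier} by expressing $\mathcal{D}_k f$ as a holomorphic derivative of $\log(v^{k/2}|f|)$, as is already implicit in equation~\eqref{eq:log-der}. That equation rearranges to
\[
	\mathcal{D}_k f(\tau) = -\frac{i}{\pi}\frac{\partial}{\partial \tau} \log(v^{k/2}|f(\tau)|).
\]
The first step is to upgrade this to a slash-covariant form at the cusp: I would show that
\[
	(\mathcal{D}_k f)|_2 \sigma_\rho(\tau) = -\frac{i}{\pi}\frac{\partial}{\partial \tau} \log(v^{k/2}|(f|_k \sigma_\rho)(\tau)|).
\]
This follows from a direct chain-rule computation. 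Writing $\sigma_\rho = \smat{a & b \\ c & d}$ and using $\Im(\sigma_\rho \tau)^{k/2} |f(\sigma_\rho \tau)| = v^{k/2}|(f|_k\sigma_\rho)(\tau)|$, the holomorphicity of $\sigma_\rho$ makes $\overline{\sigma_\rho\tau}$ antiholomorphic in $\tau$, so that $\partial/\partial \tau$ commutes with pullback along $\sigma_\rho$ up to a factor $(c\tau+d)^{-2}$, which is exactly the weight-$2$ automorphy factor.

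The second step is to substitute the Fourier expansion provided by \cref{lem:log-Fourier} into this formula and differentiate term by term, noting that $\partial/\partial \tau$ annihilates the antiholomorphic pieces $\overline{a_{\rho, m} q^{m/\ell_\rho}}$. Using $\partial v/\partial \tau = 1/(2i)$ and $\partial q^{m/\ell_\rho}/\partial \tau = (2\pi i m/\ell_\rho) q^{m/\ell_\rho}$, the contributions are:
\begin{align*}
	-\frac{i}{\pi}\frac{\partial}{\partial \tau}\left(\tfrac{k}{2}\log v\right) &= -\frac{k}{4\pi v}, \\
	-\frac{i}{\pi}\frac{\partial}{\partial \tau}\left(-2\pi \tfrac{\mathrm{ord}_\rho(f)}{\ell_\rho} v\right) &= \frac{\mathrm{ord}_\rho(f)}{\ell_\rho}, \\
	-\frac{i}{\pi}\frac{\partial}{\partial \tau}\left(a_{\rho,m} q^{m/\ell_\rho}\right) &= \frac{2m}{\ell_\rho} a_{\rho,m} q^{m/\ell_\rho},
\end{align*}
and the constant $a_{\rho,0}$ is annihilated. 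Relabelling $(2m/\ell_\rho) a_{\rho, m}$ as the new coefficient $a_{\rho, m}$ yields exactly the claimed expansion.

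There is no serious obstacle here; the statement is essentially a bookkeeping consequence of \cref{lem:log-Fourier}. The only subtle point is verifying the slash-covariance step, since the invariance $(\mathcal{D}_k f)|_2 \gamma = \mathcal{D}_k f$ recalled in the introduction applies only to $\gamma$ preserving $f$, whereas here $\sigma_\rho$ does not. One could alternatively verify the identity $(\mathcal{D}_k f)|_2 \sigma = \mathcal{D}_k(f|_k \sigma)$ for any $\sigma \in \SL_2(\R)$ by a direct computation, observing that the anomalous $\frac{kc}{2\pi i(c\tau+d)}$ term arising from $\Theta(f|_k\sigma)/(f|_k\sigma)$ is exactly cancelled by the difference between $k/(4\pi v)$ and $(c\tau+d)^{-2}\cdot k/(4\pi\Im(\sigma\tau))$; I would include one short display to this effect rather than invoking the logarithmic formulation, if that path seemed cleaner in context.
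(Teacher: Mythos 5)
Your proposal is correct and follows essentially the same route as the paper: the paper's proof consists precisely of rewriting $(\mathcal{D}_k f)|_2\sigma_\rho$ as $\frac{1}{\pi i}\frac{\partial}{\partial\tau}\log(v^{k/2}|(f|_k\sigma_\rho)(\tau)|)$ via \eqref{eq:log-der} and then invoking \cref{lem:log-Fourier}. Your additional verification of the slash-covariance of $\mathcal{D}_k$ under $\sigma_\rho$ and the term-by-term differentiation simply makes explicit what the paper leaves implicit.
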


\begin{proof}
From \eqref{eq:log-der}, we have
\[
	(\mathcal{D}_k f)|_2 \sigma_\rho (\tau) = \frac{1}{\pi i} \frac{\partial}{\partial \tau} \log (v^{k/2} |(f|_k\sigma_\rho) (\tau)|).
\]
Thus, \cref{lem:log-Fourier} gives the desired result.
\end{proof}

\subsection{Valence formula}\label{sec:cor1}

First, we let $g(\tau) = E_{N, 2}(\tau, 1)$, which leads to the well-known valence formula. 

\begin{corollary}[{\cite[Theorem 5.6.11]{CohenStromberg2017}}]\label{cor:valence}
For a weight $k$ meromorphic modular form $f$ on $\Gamma_0(N)$, we have
\[
	\sum_{z \in \Gamma_0(N) \backslash \bbH} \frac{\mathrm{ord}_z(f)}{\omega_{z}} + \sum_{\rho \in \mathcal{C}_N} \mathrm{ord}_\rho(f) = \frac{k}{2} \cdot \frac{\mathrm{vol}(\Gamma_0(N) \backslash \bbH)}{2\pi} = \frac{k}{12} [\SL_2(\Z) : \Gamma_0(N)].
\]
\end{corollary}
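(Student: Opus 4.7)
The plan is to apply \cref{thm:main} with $g(\tau) = E_{N,2}(\tau,1)$, the weight $2$ Eisenstein series evaluated at $s = 1$. This choice is engineered so that the left-hand side of \cref{thm:main} vanishes and the right-hand side reduces to a clean sum of the orders of $f$ on $\bbH$ and at the cusps.

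The first step is to evaluate $\xi_2 g$ and $\Delta_2 g$. By \eqref{eq:MPoincare-xi} with $k=2$ and $n=0$, we have $\xi_2 E_{N,2}(\tau, s) = (\overline{s}-1) E_{N,0}(\tau, \overline{s})$. Since $E_{N,0}(\tau, s)$ has a simple pole at $s=1$ with residue $1/\mathrm{vol}(\Gamma_0(N) \backslash \bbH)$, the factor $\overline{s}-1$ removes this singularity, and evaluation at $s=1$ gives the real constant $\xi_2 E_{N,2}(\tau, 1) = 1/\mathrm{vol}(\Gamma_0(N)\backslash \bbH)$. Consequently $\Delta_2 g = -\xi_0(\xi_2 g) = 0$, so $\langle \xi_2 \Delta_2 g, \log(v^{k/2}|f|)\rangle^{\mathrm{reg}} = 0$.

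The second step is to compute each cusp contribution. For $C_{1,\rho}(f,g)$, \cref{lem:Dk-Fourier} supplies the Fourier expansion of $(\mathcal{D}_k f)|_2 \sigma_\rho$, whose constant Fourier coefficient equals $\mathrm{ord}_\rho(f)/\ell_\rho$, while $(\xi_2 g)|_0 \sigma_\rho$ is the constant above. Integrating along $\tau = u + iy$ for $u \in [0, \ell_\rho]$ and letting $y \to \infty$ kills the nontrivial Fourier modes (which integrate to zero over a full period) and the $-k/(4\pi v)$ term, leaving
\[
	C_{1,\rho}(f,g) = -\frac{2\pi\, \mathrm{ord}_\rho(f)}{\mathrm{vol}(\Gamma_0(N)\backslash\bbH)}.
\]
For $C_{2,\rho}(f,g)$, the term involving $\Delta_2 g$ vanishes, and the Fourier expansion \eqref{eq:Fourier} at $s=1$ gives $(E_{N,2}|_2 \sigma_\rho)(\tau, 1) = \delta_{\rho, i\infty} + \varphi_\rho(1)/v + $ exponentially decaying terms. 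Integrating and taking $y \to \infty$ leaves $(k\ell_\rho/2)\delta_{\rho, i\infty}$, and since $\ell_{i\infty} = 1$ and exactly one class in $\mathcal{C}_N$ is equivalent to $i\infty$, one obtains $\sum_{\rho \in \mathcal{C}_N} C_{2,\rho}(f,g) = k/2$.

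Substituting these evaluations into \cref{thm:main} produces
\[
	0 = -\frac{2\pi}{\mathrm{vol}(\Gamma_0(N)\backslash\bbH)}\left(\sum_{z \in \Gamma_0(N) \backslash \bbH}\frac{\mathrm{ord}_z(f)}{\omega_z} + \sum_{\rho \in \mathcal{C}_N} \mathrm{ord}_\rho(f)\right) + \frac{k}{2},
\]
and rearrangement gives the first equality of the corollary; the second equality follows from the standard formula $\mathrm{vol}(\Gamma_0(N)\backslash \bbH) = (\pi/3)[\SL_2(\Z) : \Gamma_0(N)]$. The main technical point I expect to need care with is the removable singularity at $s = 1$ in the identity $\xi_2 P_{N,2,0}(\tau,s) = (\overline{s}-1) E_{N,0}(\tau, \overline{s})$; after that, everything reduces to extracting constant Fourier coefficients and taking limits already controlled by \cref{lem:Dk-Fourier} and \eqref{eq:Fourier}.
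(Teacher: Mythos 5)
Your proposal is correct and follows essentially the same route as the paper: both take $g(\tau) = E_{N,2}(\tau,1)$, use \eqref{eq:MPoincare-xi} to identify $\xi_2 g$ with the constant $1/\mathrm{vol}(\Gamma_0(N)\backslash\bbH)$ (so that $\Delta_2 g = 0$ and the left-hand side of \cref{thm:main} vanishes), and evaluate $C_{1,\rho}$ and $C_{2,\rho}$ via \cref{lem:Dk-Fourier} and \eqref{eq:Fourier} to obtain $-2\pi\,\mathrm{ord}_\rho(f)/\mathrm{vol}(\Gamma_0(N)\backslash\bbH)$ and $(k/2)\ell_\rho\delta_{\rho,i\infty}$ respectively. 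The final rearrangement and the volume formula match the paper's argument exactly.
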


\begin{proof}
By \eqref{eq:Fourier} and \eqref{eq:MPoincare-xi}, for each cusp $\rho$, we see that
\begin{align*}
	(g|_2 \sigma_\rho) (\tau) &= \delta_{\rho, i\infty} + c_1 v^{-1} + \sum_{m \neq 0} c_{\rho, m}(v,1) e \left(\frac{mu}{\ell_\rho}\right),\\
	\xi_2 g(\tau) &= \lim_{s \to 1} (s-1) E_{N, 0}(\tau, s) = \frac{1}{\mathrm{vol}(\Gamma_0(N) \backslash \bbH)},\\
	\Delta_2 g(\tau) &= \xi_2 \Delta_2 g(\tau) = 0
\end{align*}
for some constant $c_1$. We now compute the $C_{1,\rho}(f,g)$ and $C_{2,\rho}(f,g)$ in \cref{thm:main}. By \cref{lem:Dk-Fourier},
\begin{align*}
	C_{1,\rho} (f,g) &= -2\pi \lim_{y \to \infty} \int_{iy}^{\ell_\rho+iy} (\mathcal{D}_kf)|_2 \sigma_\rho(\tau) \cdot (\xi_2 g)|_0 \sigma_\rho (\tau) \dd \tau\\
		&= - \frac{2\pi}{\mathrm{vol}(\Gamma_0(N) \backslash \bbH)} \lim_{y \to \infty} \int_{iy}^{\ell_\rho + iy} \left(\frac{\mathrm{ord}_\rho(f)}{\ell_\rho} + \sum_{m=1}^\infty a_{\rho,m} q^{m/\ell_\rho} - \frac{k}{4\pi v}\right) \dd \tau\\
		&= -\frac{2\pi}{\mathrm{vol}(\Gamma_0(N)\backslash \bbH)} \lim_{y \to \infty} \ell_\rho \left(\frac{\mathrm{ord}_\rho(f)}{\ell_\rho} - \frac{k}{4\pi y}\right) = -2\pi \cdot \frac{\mathrm{ord}_\rho(f)}{\mathrm{vol}(\Gamma_0(N) \backslash \bbH)}.
\end{align*}
As for $C_{2,\rho}(f,g)$, since $\Delta_2 g(\tau) = 0$, we have
\begin{align*}
	C_{2,\rho} (f,g) &= \frac{k}{2} \lim_{y \to \infty} \int_{iy}^{\ell_\rho+iy} (g|_2\sigma_\rho) (\tau) \dd \tau = \frac{k}{2} \cdot \ell_\rho \delta_{\rho, i\infty}.
\end{align*}
Therefore, since $\xi_2 \Delta_2 g(\tau) = 0$, we obtain that
\[
	0 = - \frac{2\pi}{\mathrm{vol}(\Gamma_0(N) \backslash \bbH)} \left(\sum_{z \in \Gamma_0(N) \backslash \bbH} \frac{\mathrm{ord}_z(f)}{\omega_z} + \sum_{\rho \in \mathcal{C}_N} \mathrm{ord}_\rho(f) \right) + \frac{k}{2},
\]
which implies the desired result.
\end{proof}

\subsection{Rohrlich's formula of level $N$}\label{sec:Rohrlich}

Next, we let $g(\tau) = \mathrm{Coeff}_{(s-1)^1}(E_{N,2}(\tau,s))$, the first Taylor coefficient of $E_{N,2}(\tau,s)$ at $s=1$. This generally defines a function known as a polyharmonic Maass form, (see~\cite{LagariasRhoades2016, ALR2018, Matsusaka2020}). \cref{thm:main} leads to a generalization of Rohrlich's formula for level $N$.

\begin{corollary}\label{cor:Rohrlich}
	For a weight $k$ meromorphic modular form $f$ on $\Gamma_0(N)$ that has neither zeros nor poles at any cusp. Then, we have
	\begin{align*}
		-\frac{1}{\mathrm{vol}(\Gamma_0(N) \backslash \bbH)} \langle 1, \log(v^{k/2} |f|) \rangle^{\mathrm{reg}} = - 2\pi \sum_{z \in \Gamma_0(N) \backslash \bbH} \frac{\mathrm{ord}_z(f)}{\omega_z} \xi_2 g(z) + \frac{k}{2} - \log |f(i\infty)|,
	\end{align*}
	where $\xi_2 g(\tau)$ equals the constant term of the Eisenstein series $E_{N,0}(\tau,s)$ at $s=1$.
\end{corollary}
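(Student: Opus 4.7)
The plan is to apply \cref{thm:main} with $g = G_{N,2,0,1}$, the first Taylor coefficient of $E_{N,2}(\tau,s)$ at $s=1$, and to verify that (i) the left-hand side of the theorem reproduces $-\langle 1,\log(v^{k/2}|f|)\rangle^{\mathrm{reg}}/\mathrm{vol}(\Gamma_0(N)\backslash\bbH)$, and (ii) the boundary contributions $\sum_\rho (C_{1,\rho}+\overline{C_{2,\rho}})$ collapse precisely to $\tfrac{k}{2}-\log|f(i\infty)|$.

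For (i), I would compute the action of $\xi_2$, $\Delta_2$, and $\xi_2\Delta_2$ on $g$ using \eqref{eq:MPoincare-xi}. Restricting the parameter $s$ to the real axis, where $\xi_k$ commutes with $d/ds$, I match Taylor coefficients at $s=1$ in the identity $\xi_2 E_{N,2}(\tau,s)=(\overline{s}-1)E_{N,0}(\tau,\overline{s})$ against the Laurent expansion $E_{N,0}(\tau,s)=\frac{1/\mathrm{vol}(\Gamma_0(N)\backslash\bbH)}{s-1}+G_{N,0,0,0}(\tau)+\cdots$ to obtain $\xi_2 E_{N,2}(\tau,1)=1/\mathrm{vol}(\Gamma_0(N)\backslash\bbH)$ and $\xi_2 g(\tau)=G_{N,0,0,0}(\tau)$. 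The same mechanism applied to $\xi_0 E_{N,0}(\tau,s)=\overline{s}\,E_{N,2}(\tau,\overline{s})$ (noting that $\xi_0$ annihilates the pole part, which is a function of $s$ alone) yields $\xi_0 G_{N,0,0,0}=E_{N,2}(\tau,1)$, hence $\Delta_2 g=-E_{N,2}(\tau,1)$ and $\xi_2\Delta_2 g=-1/\mathrm{vol}(\Gamma_0(N)\backslash\bbH)$. This simultaneously produces the constant prefactor on the left of the corollary and identifies $\xi_2 g$ in the main sum as the constant term of $E_{N,0}(\tau,s)$ at $s=1$.

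For (ii), I would evaluate $C_{1,\rho}(f,g)$ and $C_{2,\rho}(f,g)$ cusp by cusp. Since the integration runs over a horizontal segment of length $\ell_\rho$ at height $y$, only the zero Fourier modes at cusp $\rho$ survive in the limit $y\to\infty$. Using the hypothesis $\mathrm{ord}_\rho(f)=0$, \cref{lem:Dk-Fourier} gives $(\mathcal{D}_k f)|_2\sigma_\rho=-\tfrac{k}{4\pi v}+(\text{exp.\ decay})$, while a short Laurent computation in \eqref{eq:Fourier} identifies the zero mode of $(\xi_2 g)|_0\sigma_\rho=(G_{N,0,0,0}|_0\sigma_\rho)$ as $\delta_{\rho,i\infty}v+(\text{const})-(\mathrm{vol})^{-1}\log v+(\text{decay})$. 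Multiplying and integrating yields $C_{1,\rho}(f,g)=\tfrac{k}{2}\delta_{\rho,i\infty}$, so $\sum_\rho C_{1,\rho}=\tfrac{k}{2}$. For $C_{2,\rho}$, \cref{lem:log-Fourier} gives the zero mode $\tfrac{k}{2}\log v+\log|f(\rho)|+(\text{decay})$ of $\log(v^{k/2}|(f|_k\sigma_\rho)|)$, and extracting the constant and $(s-1)$-coefficients from the Fourier expansion \eqref{eq:Fourier} of $E_{N,2}(\tau,s)$ determines the zero modes of $(\Delta_2 g)|_2\sigma_\rho$ and $(g|_2\sigma_\rho)$. Adding the two integrands produces a precise cancellation of the leading $\tfrac{k}{2}\delta_{\rho,i\infty}\log v$ terms, leaving $-\delta_{\rho,i\infty}\log|f(i\infty)|+O(\log v/v)$. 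Consequently $C_{2,\rho}(f,g)=-\delta_{\rho,i\infty}\log|f(i\infty)|$, which is real, and $\sum_\rho\overline{C_{2,\rho}}=-\log|f(i\infty)|$.

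Substituting both cusp sums into \cref{thm:main} yields the corollary. The main obstacle I anticipate is the careful bookkeeping of Laurent coefficients in $s$ of the Eisenstein series at each cusp, in particular the delicate cancellation of the divergent $\log y$ terms in $C_{2,i\infty}$ against each other; this is exactly what produces the clean finite remainder $-\log|f(i\infty)|$, and it is precisely the point where the hypothesis that $f$ has neither zeros nor poles at any cusp is indispensable, since otherwise additional $\mathrm{ord}_\rho(f)$ contributions would survive in the zero-mode analysis.
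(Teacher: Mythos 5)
Your proposal is correct and follows essentially the same route as the paper: the same choice $g=\mathrm{Coeff}_{(s-1)^1}(E_{N,2}(\tau,s))$, the identification $\xi_2 g = G_{N,0,0,0}$, $\Delta_2 g = -E_{N,2}(\cdot,1)$, $\xi_2\Delta_2 g = -1/\mathrm{vol}(\Gamma_0(N)\backslash\bbH)$ via \eqref{eq:MPoincare-xi}, and the same zero-mode evaluation of $C_{1,\rho}$ and $C_{2,\rho}$ with the cancellation of the divergent $\tfrac{k}{2}\log y$ terms leaving $\tfrac{k}{2}-\log|f(i\infty)|$. No gaps worth noting.
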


\begin{remark}
In the special case $\Gamma_0(1) = \SL_2(\Z)$, we use a condition $f(i\infty) = 1$ for normalization. By the Kronecker limit formula,
\begin{align*}
	\xi_2 g(\tau) &= \mathrm{Coeff}_{(s-1)^0} (E_{1,0}(\tau,s)) = -\frac{1}{2\pi} \log(v^6|\Delta(\tau)|) + C,
\end{align*}
where $C = (6-72\zeta'(-1) - 6\log(4\pi))/\pi$, and $\zeta(s)$ is the Riemann zeta function. Therefore, by \cref{cor:Rohrlich}, we have
\[
	- \frac{3}{\pi} \langle 1, \log(v^{k/2} |f|) \rangle^{\mathrm{reg}} = \sum_{z \in \SL_2(\Z) \backslash \bbH} \frac{\mathrm{ord}_z(f)}{\omega_z} (\log(y^6 |\Delta(z)|) - 2\pi C) + \frac{k}{2},
\]
where we put $z = x + iy$. The valence formula (\cref{cor:valence}) leads to the Rohrlich's theorem
\begin{align}\label{eq:or-Rohrlich}
	\langle 1, \log(v^{k/2} |f|) \rangle^\mathrm{reg} = -\frac{\pi}{3} \left(\sum_{z \in \SL_2(\Z) \backslash \bbH} \frac{\mathrm{ord}_z(f)}{\omega_z} \log(y^6 |\Delta(z)|) - \frac{\pi k}{6}C + \frac{k}{2} \right),
\end{align}
originally obtained by Rohrlich in~\cite{Rohrlich1984} as a modular form analogue of the classical Jensen formula in complex analysis. In Rohrlich's article, the constant term is given by
\[
	12 \left(\frac{1}{2} + \log 2 - \gamma + \frac{\zeta'(2)}{\zeta(2)}\right) \sum_{z \in \SL_2(\Z) \backslash \bbH} \frac{\mathrm{ord}_z(f)}{\omega_z},
\]
but this is equal to $-\pi kC/6 + k/2$ as stated above.
\end{remark}

\begin{proof}[Proof of \cref{cor:Rohrlich}]
By \eqref{eq:Fourier} and \eqref{eq:MPoincare-xi}, for each cusp $\rho$, we see that
\begin{align*}
	(g|_2 \sigma_\rho) (\tau) &= \delta_{\rho, i\infty} \log v + c_1 \frac{\log v}{v} + c_2 \frac{1}{v} + \sum_{m \neq 0} a_{\rho, m}(v) e \left(\frac{mu}{\ell_\rho}\right),\\
	(\xi_2 g|_0 \sigma_\rho)(\tau) &= \mathrm{Coeff}_{(s-1)^0} ((E_{N,0}|_0 \sigma_\rho)(\tau,s)) = \delta_{\rho, i\infty} v + c_3 + c_4 \log v + \sum_{m \neq 0} b_{\rho,m}(v) e \left(\frac{mu}{\ell_\rho}\right),\\
	(\Delta_2 g|_2 \sigma_\rho)(\tau) &= -\mathrm{Coeff}_{(s-1)^0} (s(E_{N,2}|_2 \sigma_\rho)(\tau,s)) = -\delta_{\rho, i\infty} + c_5 \frac{1}{v} + \sum_{m \neq 0} c_{\rho,m}(v) e \left(\frac{mu}{\ell_\rho}\right),\\
	\xi_2\Delta_2 g(\tau) &= - \mathrm{Res}_{s=1} E_{N,0}(\tau,s) = - \frac{1}{\mathrm{vol}(\Gamma_0(N) \backslash \bbH)},
\end{align*}
for some constants $c_1, \dots, c_5$ and coefficients $a_{\rho,m}(v), b_{\rho,m}(v), c_{\rho,m}(v)$ that decay exponentially as $v \to \infty$. As an example, we remark on the constants $c_1$ and $c_2$, which are specifically given by
\[
	\frac{\partial}{\partial s} \varphi_\rho(s) v^{-s} \bigg|_{s=1} = -\varphi_\rho(1) \frac{\log v}{v} + \varphi'_\rho(1) \frac{1}{v}.
\]
By our assumption on $f$, for any cusp $\rho$, we have $\mathrm{ord}_\rho(f) = 0$. Therefore, by \cref{lem:Dk-Fourier}, we obtain
\begin{align*}
	&C_{1,\rho} (f,g)\\
	&= -2\pi \lim_{y \to \infty} \int_{iy}^{\ell_\rho+iy} \left(\sum_{m=1}^\infty a'_{\rho, m} q^{m/\ell_\rho} - \frac{k}{4\pi v}\right) \cdot \left(\delta_{\rho, i\infty} v + c_3 + c_4 \log v + \sum_{m \neq 0} b_{\rho,m}(v) e \left(\frac{mu}{\ell_\rho}\right)\right) \dd \tau\\
	&= -2\pi \ell_\rho \lim_{y \to \infty} \left(-\frac{k}{4\pi y}\left(\delta_{\rho, i\infty} y + c_3 + c_4 \log y\right) + \sum_{m=1}^\infty a'_{\rho, m} e^{-2\pi y m/\ell_\rho} b_{\rho, -m}(y) \right)\\
	&= \frac{k}{2} \cdot \ell_\rho \delta_{\rho, i\infty}.
\end{align*}
On the other hand, by \cref{lem:log-Fourier}, the contributions from $a_{\rho,m}(v)$ and $c_{\rho,m}(v)$ can similarly be ignored, and we have
\begin{align*}
	C_{2,\rho} (f,g) &= \lim_{y \to \infty} \int_{iy}^{\ell_\rho+iy} \left(\log(v^{k/2}|(f|_k \sigma_\rho)(\tau)|) \cdot (\Delta_2 g)|_2\sigma_\rho (\tau) + \frac{k}{2} (g|_2\sigma_\rho) (\tau) \right) \dd \tau\\
		&= \ell_\rho \lim_{y \to \infty} \left[\left(\frac{k}{2} \log y + \log |f(\rho)|\right) \cdot \left(-\delta_{\rho,i\infty} + c_5 \frac{1}{y}\right) + \frac{k}{2} \left(\delta_{\rho, i\infty} \log y + c_1 \frac{\log y}{y} + c_2 \frac{1}{y}\right) \right]\\
		&= -\ell_\rho \delta_{\rho, i\infty} \log|f(\rho)|.
\end{align*}
Here, a minor yet important point is that the growth of $\log y$ cancels out. Therefore, we obtain
\begin{align*}
	-\frac{1}{\mathrm{vol}(\Gamma_0(N) \backslash \bbH)} \langle 1, &\log(v^{k/2} |f|) \rangle^\mathrm{reg} = -2\pi \sum_{z \in \Gamma_0(N) \backslash \bbH} \frac{\mathrm{ord}_z(f)}{\omega_z} \xi_2 g(z) + \frac{k}{2} - \log|f(i\infty)|,
\end{align*}
which concludes the proof.
\end{proof}

\subsection{Explicit version of Bringmann--Kane--L\"{o}brich--Ono--Rolen's theorem}\label{sec:cor3}

Now, we examine $g(\tau) = \mathrm{Coeff}_{(s-1)^1} (P_{N,2,n}(\tau,s))$ with $n > 0$ from the Maass--Poincar\'{e} series, focusing on the first Taylor coefficient at $s=1$, as the constant term yields only the trivial equation $0=0$. 

\begin{corollary}\label{cor:BKLOR}
	For a weight $k$ meromorphic modular form $f$ on $\Gamma_0(N)$ and positive integer $n$, we have
	\[
		\sum_{z \in \Gamma_0(N) \backslash \bbH} \frac{\mathrm{ord}_z(f)}{\omega_z} j_{N,n}(z) + \sum_{\rho \in \mathcal{C}_N} \mathrm{ord}_\rho (f) j_{N,n}(\rho) = - \mathrm{Coeff}_{q^n} \left(\frac{\Theta f(\tau)}{f(\tau)} \right),
	\]
	where $j_{N,n}(\tau) \coloneqq P_{N,0,-n}(\tau,1)$ and $j_{N,n}(\rho)$ is the constant term of the Fourier expansion of $(j_{N,n}|_0 \sigma_\rho)(\tau)$.
\end{corollary}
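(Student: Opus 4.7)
The plan is to apply \cref{thm:main} with $g(\tau) = \mathrm{Coeff}_{(s-1)^1}(P_{N,2,n}(\tau,s))$, for which all three differential-operator ingredients have clean expressions via the Maass--Poincar\'{e} series identities \eqref{eq:MPoincare-xi}.

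First I would compute $\xi_2 g$, $\Delta_2 g$, and $\xi_2\Delta_2 g$. Restricting to real $s$ and comparing the $(s-1)^1$-Taylor coefficients on both sides of $\xi_2 P_{N,2,n}(\tau,s) = (4\pi n)^{-1}(\overline{s}-1) P_{N,0,-n}(\tau,\overline{s})$ gives $\xi_2 g(\tau) = (4\pi n)^{-1}\, j_{N,n}(\tau)$. A second application of \eqref{eq:MPoincare-xi} (now with $k=0$ and index $-n$) yields $\xi_0 j_{N,n}(\tau) = 4\pi n\, P_{N,2,n}(\tau,1)$, so $\Delta_2 g(\tau) = -P_{N,2,n}(\tau,1)$; this is a holomorphic weight-$2$ cusp form, because a third application of \eqref{eq:MPoincare-xi} shows $\xi_2 P_{N,2,n}(\tau,1) = 0$ from the vanishing $(\overline{s}-1)$ factor, which in turn forces $\varphi_\rho(1) = 0$ in \eqref{eq:Fourier} at every cusp. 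In particular $\xi_2\Delta_2 g(\tau) = 0$, so the regularized inner product on the left of \cref{thm:main} vanishes and the theorem collapses to
\[
    0 = -\frac{1}{2n}\sum_{z\in\Gamma_0(N)\backslash\bbH}\frac{\mathrm{ord}_z(f)}{\omega_z}\,j_{N,n}(z) + \sum_{\rho\in\mathcal{C}_N}\bigl(C_{1,\rho}(f,g) + \overline{C_{2,\rho}(f,g)}\bigr).
\]

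Next I would show $C_{2,\rho}(f,g) = 0$ at every cusp. Since $(\Delta_2 g)|_2\sigma_\rho$ has only strictly positive $q^{m/\ell_\rho}$-Fourier modes, pairing with the expansion given by \cref{lem:log-Fourier} forces the zero $u$-mode of $\log(v^{k/2}|(f|_k\sigma_\rho)|)\cdot(\Delta_2 g)|_2\sigma_\rho$ to be a sum of terms decaying like $e^{-4\pi m v/\ell_\rho}$, while the zero $u$-mode of $(g|_2\sigma_\rho)$ equals $\mathrm{Coeff}_{(s-1)^1}(\varphi_\rho(s) v^{-s}) = \varphi'_\rho(1)/v$, which likewise tends to $0$ as $v\to\infty$.

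For $C_{1,\rho}(f,g)$, I would combine \cref{lem:Dk-Fourier} with the $s=1$ specialization of \eqref{eq:Fourier} for $P_{N,0,-n}$. At every cusp, pairing the constant mode $\mathrm{ord}_\rho(f)/\ell_\rho$ of $(\mathcal{D}_k f)|_2\sigma_\rho$ with the zero $u$-mode $(4\pi n)^{-1} j_{N,n}(\rho)$ of $(\xi_2 g)|_0\sigma_\rho$ produces the contribution $-\frac{1}{2n}\mathrm{ord}_\rho(f)\,j_{N,n}(\rho)$; the $-k/(4\pi v)$ piece together with all exponentially decaying cross-terms drop out in the $y\to\infty$ limit. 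The only additional surviving term occurs at $\rho = i\infty$, where the Maass principal part $\psi_{0,-n}(v,1)\,e(-nu) = 2\sinh(2\pi n v)\,e(-nu)$ contributes a mode-$(-n)$ piece to $(\xi_2 g)|_0\sigma_{i\infty}$; pairing it with the mode-$n$ coefficient $a_{i\infty,n}\,e^{-2\pi nv}$ of $(\mathcal{D}_k f)$, using $e^{-2\pi nv}\cdot 2\sinh(2\pi nv) = 1 - e^{-4\pi nv} \to 1$ and $a_{i\infty,n} = \mathrm{Coeff}_{q^n}(\Theta f/f)$, yields an extra term $-\frac{1}{2n}\mathrm{Coeff}_{q^n}(\Theta f/f)$ in $C_{1,i\infty}$. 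Substituting everything into the displayed identity and multiplying through by $-2n$ gives the claim. The main obstacle I anticipate is the careful Fourier-mode bookkeeping at $\rho = i\infty$: the non-holomorphic $\sinh$ contribution to $j_{N,n}$, which was absent in the $\SL_2(\Z)$ setting of Bruinier--Kohnen--Ono, is precisely what produces $\mathrm{Coeff}_{q^n}(\Theta f/f)$ on the right-hand side, and one must verify that the $\delta_{\rho,i\infty}$ factor in \eqref{eq:Fourier} excludes analogous non-decaying contributions at every other cusp.
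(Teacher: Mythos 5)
Your proposal is correct and follows essentially the same route as the paper: the same choice $g = \mathrm{Coeff}_{(s-1)^1}(P_{N,2,n}(\tau,s))$, the same identifications $\xi_2 g = (4\pi n)^{-1}j_{N,n}$, $\Delta_2 g = -P_{N,2,n}(\cdot,1)$ (a holomorphic cusp form), $\xi_2\Delta_2 g = 0$, and the same cusp bookkeeping in which $C_{2,\rho}$ vanishes while $C_{1,\rho}$ yields $-\tfrac{1}{2n}\bigl(\mathrm{ord}_\rho(f)\,j_{N,n}(\rho) + \delta_{\rho,i\infty}\,\mathrm{Coeff}_{q^n}(\Theta f/f)\bigr)$ via $e^{-2\pi nv}\cdot 2\sinh(2\pi nv)\to 1$.
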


\begin{remark}
Bringmann--Kane--L\"{o}brich--Ono--Rolen~\cite[Theorem 1.3]{BKLOR2018} showed that there exists a cusp form $h \in S_2(\Gamma_0(N))$ such that
\begin{align*}
	\sum_{z \in \Gamma_0(N) \backslash \bbH} \frac{\mathrm{ord}_z(f)}{\omega_z} j_{N,n}(z) + \sum_{\rho \in \mathcal{C}_N} \mathrm{ord}_\rho (f) j_{N,n}(\rho) = - \mathrm{Coeff}_{q^n} \left(\frac{\Theta f(\tau)}{f(\tau)} \right) + a_h(n),
\end{align*}
where $a_h(n)$ is the $n$-th Fourier coefficient of $h(\tau)$. Our \cref{cor:BKLOR} provides an explicit version of their Theorem 1.3, namely, that the cusp form $h$ is always $0$.
\end{remark}

\begin{example}
We provide a specific example for the case $N=11$, where the space of cusp forms is non-trivial. Let $M_0^{!, \infty}(\Gamma_0(11))$ denote the subspace of $M_0^!(\Gamma_0(11))$ consisting of weakly holomorphic modular forms whose poles are supported only at $i\infty$ and are holomorphic at other cusps. As shown in \cite{Yang2006} and \cite[Section 3]{Kang2021}, the space has a natural basis given by $\{1\} \cup \{f_{11,m}(\tau) : m \ge 2\}$, where $f_{11,m}(\tau)$ is given as
\begin{align*}
	f_{11,m}(\tau) &= (j_{11,m}(\tau) - c_{11,m}(0)) + a_{11}(m,-1) (j_{11,1}(\tau) - c_{11,1}(0))\\
		&= q^{-m} + a_{11}(m,-1) q^{-1} + O(q).
\end{align*}
The constants $c_{11,m}(0)$ and $a_{11}(m,-1)$ are defined as follows. First, $c_{11,m}(0)$ is the constant term of the Fourier expansion of $j_{11,m}(\tau)$, calculated as
\[
	c_{11,m}(0) = -\frac{1}{5} \left(\sigma_1(m) - 11^2 \sigma_1 \left(\frac{m}{11}\right)\right)
\]
in \cite[(2.6)]{Lobrich2018}. As for $a_{11}(m,-1)$, by combining \cite[Theorem 1.12]{JKK2023Ramanujan} and the fact that $S_2(\Gamma_0(11))$ is generated by
\[
	h_{11,-1}(\tau) \coloneqq \eta(\tau)^2 \eta(11\tau)^2 = q \prod_{n=1}^\infty (1-q^n)^2 (1-q^{11n})^2 = q + \sum_{m=2}^\infty b_{11}(-1,m) q^m,
\]
we have $a_{11}(m,-1) = - b_{11}(-1,m)$. Therefore, \cref{cor:BKLOR} and \cref{cor:valence} yield
\begin{align}\label{eq:f11-exp}
\begin{split}
	&\sum_{z \in \Gamma_0(11) \backslash \bbH} \frac{\mathrm{ord}_z(f)}{\omega_z} f_{11,m}(z) + \sum_{\rho \in \mathcal{C}_{11}} \mathrm{ord}_\rho (f) f_{11,m}(\rho)\\
	&= - \mathrm{Coeff}_{q^m} \left(\frac{\Theta f(\tau)}{f(\tau)} \right) + b_{11}(-1,m) \mathrm{Coeff}_{q} \left(\frac{\Theta f(\tau)}{f(\tau)}\right) + \frac{k}{5} \left(\sigma_1(m) - 11^2 \sigma_1 \left(\frac{m}{11}\right) - b_{11}(-1,m) \right).
\end{split}
\end{align}
For instance, let $m=2, 3, 4$ and take a weight 2 modular form
\[
	f(\tau) = -\frac{1}{10} \left(E_2(\tau) - 11 E_2(11\tau) + 24 h_{11,-1}(\tau)\right),
\]
where $E_2(\tau) = 1 - 24 \sum_{n=1}^\infty \sigma_1(n) q^n$ is the weight 2 Eisenstein series. As shown in \cite[Example 6.1]{JeonKangKim2022}, this form satisfies $\mathrm{ord}_\rho(f) = 0$ for $\rho \in \{0, i\infty\}$ and has two zeros $z_+, z_-$ on $\Gamma_0(11) \backslash \bbH$, which are either two simple distinct zeros or a single double zero. Since
\[
	h_{11,-1}(\tau) = q - 2q^2 - q^3 + 2q^4 + \cdots
\]
and
\[
	\frac{\Theta f(\tau)}{f(\tau)} = 24q^2 + 36q^3 - 240q^4 + \cdots,
\]
we find from \eqref{eq:f11-exp} that $f_{11,2}(z_+) + f_{11,2}(z_-) = -22, f_{11,3}(z_+) + f_{11,3}(z_-) = -34, f_{11,4}(z_+) + f_{11,4}(z_-) = 242$.
On the other hand, as $f_{11,2}(\tau) = q^{-2} + q^{-1} + 5q + 8q^2 + \cdots$, (see~\cite{Kang2021, Yang2006}), we can confirm that 
\begin{align}\label{eq:f11-rel}
	f_{11,4}(\tau) = f_{11,2}(\tau)^2 - 4f_{11,3}(\tau) - 4f_{11,2}(\tau) - 36.
\end{align}
Therefore, we have $f_{11,2}(z_+) f_{11,2}(z_-) = 197$, which implies that $f_{11,2}(z_+)$ and $f_{11,2}(z_-)$ are roots of the quadratic polynomial $X^2 + 22X + 197$. In particular, they are distinct.
\end{example}

\begin{remark}
Approximate calculations using the Fourier expansions of $f(\tau)$ and $f_{11,2}(\tau)$, along with numerical experiments in Mathematica, show that that $f(\tau)$ has zeros at 
\begin{align*}
	z_\pm &= \pm 0.22727\dots + i 0.19813\dots \thickapprox \frac{\pm 5 + \sqrt{-19}}{22},
\end{align*}
and $f_{11,2}(z_\pm) = -11. \mp i8.7178\dots \thickapprox -11 \mp 2\sqrt{-19}$. This is consistent with the above result. The same result was calculated using a different method in a previous work by the first three authors~\cite[Example 6.1]{JeonKangKim2022}, but several typographical errors need to be corrected there. First, the calculation of the Fourier coefficients of $E_f(\tau)$ is incorrect, and second, the constant term $-36$ in equation~\eqref{eq:f11-rel} is missing. By correcting these, one can obtain the same result as above.
\end{remark}

\begin{proof}[Proof of \cref{cor:BKLOR}]
The approach is exactly the same as those in the previous corollaries. For each cusp $\rho$, we have
\begin{align*}
	(g|_2 \sigma_\rho)(\tau) &= \delta_{\rho, i\infty} \mathrm{Coeff}_{(s-1)^1}(\psi_{2,n}(v,s)) e(nu) + c_1 \frac{\log v}{v} + c_2 \frac{1}{v} + \sum_{m \neq 0} a_{\rho,m}(v) e\left(\frac{mu}{\ell_\rho}\right),\\
	(\xi_2 g|_0 \sigma_\rho)(\tau) &= (4\pi n)^{-1} P_{N, 0, -n}(\sigma_\rho \tau,1) = \frac{\delta_{\rho, i\infty} \psi_{0,-n}(v,1) e(-nu)}{4\pi n} + \frac{C_{\rho, n}}{4\pi n} + \sum_{m \neq 0} b_{\rho,m}(v) e \left(\frac{mu}{\ell_\rho}\right),\\
	(\Delta_2 g|_2 \sigma_\rho)(\tau) &= - P_{N,2,n}|_2 \sigma_\rho (\tau,1) = -\delta_{\rho, i\infty} \psi_{2,n}(v,1) e(nu) + c_3 \frac{1}{v} + \sum_{m \neq 0} c_{\rho,m}(v) e \left(\frac{mu}{\ell_\rho}\right),\\
	\xi_2 \Delta_2 g(\tau) &= - (4\pi n)^{-1} \mathrm{Res}_{s=1}(s P_{N,0,n}(\tau,s)) = 0
\end{align*}
for some constants $c_1, c_2, c_3, C_{\rho, n}$ and coefficients $a_{\rho, m}(v), b_{\rho,m}(v), c_{\rho, m}(v)$ that decay exponentially as $v \to \infty$. Regarding the above expressions, we have a few additional notes. First, since $\xi_2 \Delta_2 g(\tau) = 0$, the function $\Delta_2 g(\tau)$ must be holomorphic in $\tau$. Since 
\begin{align}\label{eq:psi2}
	\psi_{2,n}(v,1) e(nu) = (4\pi nv)^{-1} M_{1, 1/2}(4\pi nv) e(nu) = q^n,
\end{align}
(see~\cite[7.2.4]{MOS1966} or \cite[(2.21)]{JeonKangKim2013}), the constant $c_3$ must be 0 and $\Delta_2 g(\tau)$ defines a holomorphic cusp form. Moreover, it is also known (\cite[(2.19)]{JeonKangKim2013}) that
\begin{align*}
	\psi_{0,-n}(v,1) e(-nu) &= (e^{2\pi nv} - e^{-2\pi nv})e^{-2\pi inu} = q^{-n} - \overline{q}^n.
\end{align*}
Then, by \cref{lem:Dk-Fourier}, we have
\begin{align*}
	C_{1,\rho} (f,g) &= -2\pi \lim_{y \to \infty} \int_{iy}^{\ell_\rho+iy} \left(\frac{\mathrm{ord}_\rho(f)}{\ell_\rho} + \sum_{m=1}^\infty a'_{\rho,m} q^{m/\ell_\rho} - \frac{k}{4\pi v}\right) \cdot \left(\frac{\delta_{\rho, i\infty} \psi_{0,-n}(v,1) e(-nu)}{4\pi n} + \frac{C_{\rho, n}}{4\pi n} \right) \dd \tau\\
	&= -2\pi \ell_\rho \lim_{y \to \infty} \left( \left(\frac{\mathrm{ord}_\rho(f)}{\ell_\rho} - \frac{k}{4\pi y}\right) \cdot \frac{C_{\rho,n}}{4\pi n} + a'_{\rho,n} e^{- \frac{2\pi n y}{\ell_\rho}} \delta_{\rho, i\infty} \cdot \frac{e^{2\pi ny} - e^{-2\pi ny}}{4\pi n} \right)\\
	&= - \frac{1}{2n} \left(\mathrm{ord}_\rho(f) C_{\rho,n} + \delta_{\rho, i\infty} a'_{i\infty,n} \right).
\end{align*}
Next, we note that $\Delta_2 g(\tau)$ is a holomorphic cusp form. Then by \cref{lem:log-Fourier}, we see that
\begin{align*}
	C_{2,\rho} (f,g) &= \lim_{y \to \infty} \int_{iy}^{\ell_\rho+iy} \left(\log(v^{k/2}|(f|_k \sigma_\rho)(\tau)|) \cdot (\Delta_2 g)|_2\sigma_\rho (\tau) + \frac{k}{2} (g|_2\sigma_\rho) (\tau) \right) \dd \tau = 0.
\end{align*}
Given that $\xi_2 \Delta_2 g(\tau) = 0$, we arrive at
\[
	0 = -2\pi \sum_{z \in \Gamma_0(N) \backslash \bbH} \frac{\mathrm{ord}_z(f)}{\omega_z} \xi_2 g(z) - \frac{1}{2n} \sum_{\rho \in \mathcal{C}_N} \mathrm{ord}_\rho(f) C_{\rho,n} - \frac{a'_{i\infty,n}}{2n}.
\]
Since $\xi_2 g(\tau) = (4\pi n)^{-1} j_{N,n}(\tau)$, $C_{\rho, n} = j_{N,n}(\rho)$, and $a'_{i\infty,n}$ is the $n$-th Fourier coefficient of $\frac{\Theta f(\tau)}{f(\tau)}$ by definition, we obtain the desired result.
\end{proof}

\subsection{Bringmann--Kane's formula of level $N$}\label{sec:cor4}

Similarly to \cref{sec:Rohrlich}, we consider the higher-order Taylor coefficients of the Maass--Poincar\'{e} series $g(\tau) = \mathrm{Coeff}_{(s-1)^2} (P_{N,2,n}(\tau,s))$ with $n > 0$. The consequence in this case extends the generalized Rohrlich formula given by Bringmann--Kane~\cite{BringmannKane2020} at level 1 to a general level $N$. Their proof relies on the fact that $\SL_2(\Z)$ has only one cusp, and in the case of Fuchsian groups with one cusp, a generalization is already known by Cogdell--Jorgenson--Smajlovi\'{c}~\cite{CJS2023}. Our following proof works for the general $\Gamma_0(N)$ regardless of the number of cusps.

\begin{corollary}\label{cor:BringmannKane}
	For a positive integer $n$ and a weight $k$ meromorphic modular form $f$ on $\Gamma_0(N)$ that has neither zeros nor poles at any cusp, we have
	\[
		\langle j_{N,n}, \log(v^{k/2} |f|)\rangle^{\mathrm{reg}} = -2\pi \sum_{z \in \Gamma_0(N) \backslash \bbH} \frac{\mathrm{ord}_z(f)}{\omega_z} \mathbb{J}_{N,n}(z),
	\]
	where $j_{N,n}(\tau) \coloneqq P_{N,0,-n}(\tau, 1)$ as before, and $\mathbb{J}_{N,n}(\tau) \coloneqq - \mathrm{Coeff}_{(s-1)^1} (P_{N,0,-n}(\tau,s)) + \gamma j_{N,n}(\tau)$ with Euler's constant $\gamma$.
\end{corollary}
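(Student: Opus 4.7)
I would apply \cref{thm:main} to $g(\tau) \coloneqq \mathrm{Coeff}_{(s-1)^2}(P_{N,2,n}(\tau,s))$, taking the next Taylor coefficient beyond the one used in \cref{sec:cor3}. The motivation is that each application of $\xi_k$ shifts Taylor coefficients by one and toggles between the weight-$2$ and weight-$0$ Maass–Poincar\'e series; with $g$ chosen this way, two applications of the antilinear derivatives take us down to the weight-$0$ Niebur--Poincar\'e function $j_{N,n}$, which is precisely what we want paired against $\log(v^{k/2}|f|)$ in the regularized inner product.

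\textbf{Identification of operators.} Write $P_{N,2,n}(\tau,s) = \sum_{m\ge 0} G_m(\tau)(s-1)^m$ and $P_{N,0,-n}(\tau,s) = \sum_{l\ge 0} F_l(\tau)(s-1)^l$; both are holomorphic at $s=1$ since $n \neq 0$. Applying \eqref{eq:MPoincare-xi} and using that $\xi_k$ is $\C$-antilinear in its scalar argument (so $(s-1)^m$ becomes $(\bar s - 1)^m$), a comparison of $(\bar s - 1)^m$ coefficients yields $\xi_2 G_m = (4\pi n)^{-1} F_{m-1}$ for $m \ge 1$, and similarly $\xi_0 F_l = (4\pi n)(G_{l-1} + G_l)$ for $l \ge 1$. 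In particular
\[
  \xi_2 g = (4\pi n)^{-1} F_1, \qquad \Delta_2 g = -(G_0 + G_1), \qquad \xi_2 \Delta_2 g = -(4\pi n)^{-1} j_{N,n}(\tau),
\]
so the left-hand side of \cref{thm:main} equals $-(4\pi n)^{-1}\langle j_{N,n},\log(v^{k/2}|f|)\rangle^{\mathrm{reg}}$.

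\textbf{Where $\gamma$ enters, and the reduction.} To convert the $F_1$ appearing in the interior sum into $\mathbb{J}_{N,n}$, I would use the factorization $P_{N,0,-n}(\tau,s) = \Gamma(s)^{-1} F_{N,-n}(\tau,s)$ together with $\Gamma(s) = 1 - \gamma(s-1) + O((s-1)^2)$, which gives $F_1(\tau) = \gamma j_{N,n}(\tau) - \mathbb{J}_{N,n}(\tau)$. Substituting this identity and multiplying \cref{thm:main} through by $-4\pi n$, the desired conclusion reduces to the boundary identity
\[
  \sum_{\rho\in\mathcal{C}_N}\bigl(C_{1,\rho}(f,g)+\overline{C_{2,\rho}(f,g)}\bigr) \;=\; \frac{\gamma}{2n}\sum_{z\in\Gamma_0(N)\backslash\bbH}\frac{\mathrm{ord}_z(f)}{\omega_z}\,j_{N,n}(z),
\]
and by \cref{cor:BKLOR} (available because $\mathrm{ord}_\rho(f) = 0$ for every cusp) the right-hand side equals $-\frac{\gamma}{2n}\mathrm{Coeff}_{q^n}(\Theta f/f)$.

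\textbf{Main obstacle.} The remaining task is the direct evaluation of the boundary integrals $C_{1,\rho}$ and $C_{2,\rho}$ via the Fourier expansions obtained by differentiating \eqref{eq:Fourier} in $s$ up to order two at $s=1$; these expansions carry various pieces growing like $v$, $\log v$, $v\log v$, $1/v$, and $\log v/v$ that must cancel in the $y\to\infty$ limit, exactly as in the proofs of \cref{cor:valence,cor:Rohrlich,cor:BKLOR}. The assumption $\mathrm{ord}_\rho(f) = 0$ is essential here: it removes the leading constant in $(\mathcal{D}_k f)|_2\sigma_\rho$ (\cref{lem:Dk-Fourier}) and kills the linear-in-$v$ term in $\log(v^{k/2}|(f|_k\sigma_\rho)|)$ (\cref{lem:log-Fourier}), which is what makes the above limits exist. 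The delicate part will be tracking the surviving $\gamma$-contribution, which originates from the $s$-derivatives of $\Gamma(2s)^{-1}$ inside $\psi_{2,n}(v,s)$ (and companion derivatives of $\varphi_\rho(s)$), and verifying that it lines up with the $q^n$-Fourier coefficient of $\Theta f/f$ extracted by $C_{1,i\infty}$.
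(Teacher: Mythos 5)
Your proposal follows essentially the same route as the paper: the choice $g = \mathrm{Coeff}_{(s-1)^2}(P_{N,2,n}(\tau,s))$, the identifications $\xi_2 g = (4\pi n)^{-1}\mathrm{Coeff}_{(s-1)^1}(P_{N,0,-n}(\tau,s))$ and $\xi_2\Delta_2 g = -(4\pi n)^{-1}j_{N,n}$, the rewriting of $\mathrm{Coeff}_{(s-1)^1}(P_{N,0,-n})$ as $\gamma j_{N,n} - \mathbb{J}_{N,n}$, and the final appeal to \cref{cor:BKLOR} (valid since $\mathrm{ord}_\rho(f)=0$) are all exactly what the paper does, and your reduction to the boundary identity $\sum_\rho(C_{1,\rho}+\overline{C_{2,\rho}}) = -\frac{\gamma}{2n}\mathrm{Coeff}_{q^n}(\Theta f/f)$ is arithmetically correct. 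The one substantive piece you defer is precisely where the paper spends most of its effort: showing $C_{2,\rho}=0$ and $C_{1,\rho} = -\delta_{\rho,i\infty}\frac{\gamma}{2n}\mathrm{Coeff}_{q^n}(\Theta f/f)$ requires the asymptotics of the order-derivatives $\partial_s I_{s-1/2}(2\pi ny)|_{s=1}$ and $\partial_s M_{1,s-1/2}(4\pi ny)|_{s=1}$ (via the exponential-integral and $\mathcal{M}^+/W$ decompositions cited from the NIST handbook and Ahlgren--Lagarias--Rhoades), and the surviving $\gamma$ in $C_{1,i\infty}$ actually comes from the $\Gamma(s)^{-1}$ factor in $\psi_{0,-n}$ (equivalently $2\gamma\sinh(2\pi ny)$) rather than from $\Gamma(2s)^{-1}$ inside $\psi_{2,n}$ or from $\varphi_\rho(s)$; those latter pieces only produce the polynomially decaying terms that vanish in the limit.
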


\begin{remark}
In the case of $N=1$, it corresponds to Theorem 1.3 of Bringmann--Kane~\cite{BringmannKane2020}. By expressing their notation (with superscript $\mathrm{BK}$) in our notation, we obtain the relationships
\[
	j_n^\mathrm{BK}(\tau) = j_{1,n}(\tau) - 24 \sigma_1(n), \qquad \mathbbm{j}_n^\mathrm{BK}(\tau) = \mathbb{J}_{1,n}(\tau) - 4\sigma_1(n) \log(v^6 |\Delta(\tau)|) + C_n
\]
for some constant $C_n$. We can verify the equivalence by referring to their definitions and Rohrlich's formula in \eqref{eq:or-Rohrlich}. It is a modest yet important achievement that we have summarized their various normalization conditions and the constant $c_n$ (which appears in their Theorem 1.3 and differs from the constant $C_n$ above) into a simpler expression, despite its somewhat complex original definition.
\end{remark}

\begin{proof}[Proof of \cref{cor:BringmannKane}]
For each cusp $\rho$, we have
\begin{align*}
	(g|_2\sigma_\rho)(\tau) &= \delta_{\rho, i\infty} \mathrm{Coeff}_{(s-1)^2} (\psi_{2,n}(v,s)) e(nu) + \frac{c_1 (\log v)^2 + c_2 \log v + c_3}{v} + \sum_{m \neq 0} a_{\rho,m}(v) e\left(\frac{mu}{\ell_\rho}\right),\\
	(\xi_2g|_0\sigma_\rho)(\tau) &= (4\pi n)^{-1} \mathrm{Coeff}_{(s-1)^1} (P_{N,0,-n}(\sigma_\rho \tau,s))\\
		&= \frac{\delta_{\rho, i\infty} \mathrm{Coeff}_{(s-1)^1} (\psi_{0,-n}(v,s)) e(-nu)}{4\pi n} + c_4 + c_5 \log v + \sum_{m \neq 0} b_{\rho,m}(v) e\left(\frac{mu}{\ell_\rho}\right),\\
	(\Delta_2 g|_2\sigma_\rho)(\tau) &= \mathrm{Coeff}_{(s-1)^1} (-s P_{N,2,n}|_2 \sigma_\rho (\tau,s)) = -P_{N,2,n}|_2 \sigma_\rho(\tau,1) - \mathrm{Coeff}_{(s-1)^1} (P_{N,2,n}|_2 \sigma_\rho (\tau,s))\\
		&= - \delta_{\rho, i\infty} (\mathrm{Coeff}_{(s-1)^1} (\psi_{2,n}(v,s)) + \psi_{2,n}(v,1)) e(nu) + \frac{c_6 \log v + c_7}{v} + \sum_{m \neq 0} c_{\rho,m}(v) e \left(\frac{mu}{\ell_\rho}\right),\\
	(\xi_2 \Delta_2 g|_0 \sigma_\rho)(\tau) &= - (4\pi n)^{-1} P_{N,0,-n}(\sigma_\rho \tau,1)\\
		&= -\frac{\delta_{\rho, i\infty} \psi_{0,-n}(v,1) e(-nu)}{4\pi n} - \frac{C_{\rho, n}}{4\pi n} + \sum_{m \neq 0} d_{\rho,m}(v) e \left(\frac{mu}{\ell_\rho}\right)
\end{align*}
for some constans $c_1, \dots, c_7$, $C_{\rho,n}$, and coefficients $a_{\rho,m}(v), \dots, d_{\rho, m}(v)$ that decay exponentially as $v \to \infty$. Since the constant term of \cref{lem:Dk-Fourier} is 0 due to our assumption on $f$, it follows that
\begin{align*}
	&C_{1,\rho} (f,g)\\
	&= -2\pi \lim_{y \to \infty} \int_{iy}^{\ell_\rho+iy} \left(\sum_{m=1}^\infty a'_{\rho,m} q^{m/\ell_\rho} - \frac{k}{4\pi v} \right) \cdot \left(\frac{\delta_{\rho, i\infty} \mathrm{Coeff}_{(s-1)^1} (\psi_{0,-n}(v,s)) e(-nu)}{4\pi n} + c_4 + c_5 \log v\right) \dd \tau\\
	&= -2\pi \ell_\rho \lim_{y \to \infty} \left( \frac{-k}{4\pi y} (c_4 + c_5 \log y) + \delta_{\rho, i\infty} \frac{a'_{i\infty, n} e^{-2\pi ny} \mathrm{Coeff}_{(s-1)^1}(\psi_{0,-n}(y,s))}{4\pi n} \right)\\
	&= - \frac{\delta_{\rho, i\infty} a'_{i\infty,n}}{2n} \lim_{y \to \infty} e^{-2\pi ny} \mathrm{Coeff}_{(s-1)^1}(\psi_{0,-n}(y,s)).
\end{align*}
Here, we have
\begin{align*}
	\mathrm{Coeff}_{(s-1)^1} \left(\psi_{0,-n}(y,s)\right) &= \mathrm{Coeff}_{(s-1)^1} \left(2\pi \sqrt{ny} \cdot \frac{I_{s-1/2}(2\pi ny)}{\Gamma(s)} \right)\\
		&= 2\gamma \sinh(2\pi ny) + 2\pi \sqrt{ny} \frac{\partial}{\partial s} I_{s-1/2} (2\pi ny) \bigg|_{s=1}.
\end{align*}
By \cite[6.12.1, 6.12.2, and 10.38.6]{NIST}, since
\[
	2\pi \sqrt{ny} \frac{\partial}{\partial s} I_{s-1/2} (2\pi ny) \bigg|_{s=1} = -\left(E_1(4\pi ny) e^{2\pi ny} + \mathrm{Ei}(4\pi ny) e^{-2\pi ny}\right) \sim - \frac{e^{2\pi ny}}{4\pi ny}
\]
as $y \to \infty$, we have
\begin{align}\label{eq:cor4-1}
	C_{1,\rho}(f,g) = - \frac{\delta_{\rho, i\infty} \gamma}{2n} \mathrm{Coeff}_{q^n} \left(\frac{\Theta f(\tau)}{f(\tau)}\right).
\end{align}
In a similar manner, using \cref{lem:log-Fourier}, we find that
\begin{align*}
	&C_{2,\rho} (f,g)\\
	&= \lim_{y \to \infty} \int_{iy}^{\ell_\rho+iy} \left(\left(\frac{k}{2} \log v + \log |f(\rho)| + \sum_{m \neq 0} b'_{\rho,m}e^{-2\pi \frac{|m|}{\ell_\rho}v} e \left(\frac{mu}{\ell_\rho}\right)\right) \cdot (\Delta_2 g)|_2\sigma_\rho (\tau) + \frac{k}{2} (g|_2\sigma_\rho) (\tau) \right) \dd \tau\\
	&= \ell_\rho \lim_{y \to \infty} \bigg[\left(\frac{k}{2} \log y + \log |f(\rho)|\right) \frac{c_6 \log y + c_7}{y} - \delta_{\rho, i\infty} b'_{i\infty, -n} e^{-2\pi ny} (\mathrm{Coeff}_{(s-1)^1} (\psi_{2,n}(y,s)) + \psi_{2,n}(y,1))\\
	&\qquad + \frac{k}{2} \frac{c_1 (\log y)^2 + c_2 \log y + c_3}{y} \bigg]\\
	&= -\delta_{\rho, i\infty} b'_{i\infty, -n} \lim_{y \to \infty} e^{-2\pi ny} (\mathrm{Coeff}_{(s-1)^1} (\psi_{2,n}(y,s)) + \psi_{2,n}(y,1)).
\end{align*}
According to \eqref{eq:psi2}, we obtain
\begin{align*}
	\mathrm{Coeff}_{(s-1)^1} (\psi_{2,n}(y,s)) &= 2(\gamma-1) e^{-2\pi ny} + (4\pi ny)^{-1} \frac{\partial}{\partial s} M_{1, s-\frac{1}{2}}(4\pi ny) \bigg|_{s=1}.
\end{align*}
Using the same notation as in~\cite{ALR2018} and following \cite[9.233]{GR2015}, we have
\begin{align*}
	M_{1, s-\frac{1}{2}}(4\pi ny) = -\frac{\Gamma(2s)}{\Gamma(s-1)} \mathcal{M}_{1, s-1/2}^+(4\pi ny) - \frac{\Gamma(2s)}{\Gamma(s+1)} e^{-\pi is} W_{1,s-1/2}(4\pi ny).
\end{align*}
By \cite[Proposition A.1]{ALR2018}, we have
\[
	(4\pi ny)^{-1} \frac{\partial}{\partial s} M_{1, s-\frac{1}{2}}(4\pi ny) \bigg|_{s=1} \sim - (4\pi ny)^{-1} \mathcal{M}_{1,1/2}^+(4\pi ny) \sim \frac{e^{2\pi ny}}{(4\pi ny)^2} 
\]
as $y \to \infty$, and hence we have
\begin{align}\label{eq:cor4-2}
	C_{2,\rho}(f,g) = 0.
\end{align}
Therefore, by \cref{thm:main} and equations \eqref{eq:cor4-1} and \eqref{eq:cor4-2}, we find that
\begin{align*}
	&-\frac{1}{4\pi n} \langle j_{N,n}, \log(v^{k/2} |f|)\rangle^{\mathrm{reg}}\\
	&= - \frac{1}{2n} \sum_{z \in \Gamma_0(N) \backslash \bbH} \frac{\mathrm{ord}_z(f)}{\omega_z} \mathrm{Coeff}_{(s-1)^1}(P_{N,0,-n}(z,s)) - \frac{\gamma}{2n} \mathrm{Coeff}_{q^n} \left(\frac{\Theta f(\tau)}{f(\tau)}\right),
\end{align*}
where we applied $\xi_2 g(\tau) = (4\pi n)^{-1} \mathrm{Coeff}_{(s-1)^1} (P_{N,0,-n}(\tau,s))$ and $\xi_2 \Delta_2 g(\tau) = -(4\pi n)^{-1} j_{N,n}(\tau)$. Finally, \cref{cor:BKLOR} yields the desired result.
\end{proof}

\subsection{Twisted traces of CM values of the Eisenstein series} \label{sec:cor5}

Finally, in the case of level 1, we consider the Eisenstein series $g(\tau) = E_{1,2}(\tau,s)$ with complex parameter $s$ and demonstrate that, by applying the generalized Borcherds product, we can provide a more shorter alternative proof of the result by Duke--Imamo\={g}lu--T\'{o}th concerning the twisted trace of CM values. The key idea is to obtain an expression using the regularized Petersson inner product, which allows for an effective use of the self-adjointness of Hecke operators. First, the consequence of \cref{thm:main} is as follows.

\begin{corollary}\label{cor:Eisen-case}
	For a weight $0$ meromorphic modular form on $\SL_2(\Z)$ with $f(i\infty) = \lim_{\tau \to i\infty} f(\tau) = 1$, we have
	\[
		s(1-s) \langle E_{1,0}(\cdot, s), \log|f| \rangle^{\mathrm{reg}} = -2\pi \sum_{z \in \SL_2(\Z) \backslash \bbH} \frac{\mathrm{ord}_z(f)}{\omega_z} E_{1,0}(z,s).
	\]
\end{corollary}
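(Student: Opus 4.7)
My plan is to apply \cref{thm:main} to $N=1$, $k=0$, and the smooth weight-$2$ modular form $g(\tau) = E_{1,2}(\tau, s)$, treating the complex parameter $s$ as a free variable.

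First, I compute the differential operators on $g$ via \eqref{eq:MPoincare-xi}. With $k=2, n=0$, one gets $\xi_2 g = (\bar s - 1) E_{1,0}(\cdot, \bar s)$. Applying $\xi_0$ once more --- noting that $\xi_0$ is conjugate-linear on scalars, so $\xi_0((\bar s - 1) h) = (s-1) \xi_0 h$ --- and using \eqref{eq:MPoincare-xi} at $k=0$ to rewrite $\xi_0 E_{1,0}(\cdot, \bar s) = s E_{1,2}(\cdot, s)$, one finds $\Delta_2 g = s(1-s) g$, so that $g$ is a $\Delta_2$-eigenfunction. Therefore
\[
\xi_2 \Delta_2 g(\tau) = s(1-s)(\bar s - 1) E_{1,0}(\tau, \bar s).
\]

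Next, I verify that the boundary contributions $C_{1, i\infty}(f, g)$ and $C_{2, i\infty}(f, g)$ at the unique cusp $i\infty$ of $\SL_2(\Z)$ both vanish. Since $f(i\infty) = 1$, \cref{lem:log-Fourier} and \cref{lem:Dk-Fourier} produce Fourier expansions of $\log|f|$ and $\mathcal{D}_0 f = \Theta f/f$ at $i\infty$ with vanishing constant-in-$u$ part, whose nonconstant modes decay exponentially in $v$. On the other hand, by \eqref{eq:Fourier}, both $E_{1,0}(\tau, \bar s)$ and $E_{1,2}(\tau, s)$ have constant-in-$u$ pieces of polynomial growth (namely $v^{\bar s}$, $\varphi(\bar s) v^{1-\bar s}$ and $v^{s-1}$, $\varphi(s) v^{-s}$, respectively) together with exponentially decaying nonconstant Fourier modes. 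In the period integrals over $[iy, 1+iy]$, orthogonality of the characters $e(mu)$ annihilates every cross-term whose $u$-frequencies do not cancel, leaving only couplings between two exponentially small modes; these tend to $0$ as $y \to \infty$. An analogous argument, with $\log|f|$ playing the role of the exponentially decaying factor, handles $C_{2, i\infty}$.

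With both boundary contributions zero, \cref{thm:main} reduces to
\[
s(1-s)(\bar s - 1) \langle E_{1,0}(\cdot, \bar s), \log|f| \rangle^{\mathrm{reg}} = -2\pi (\bar s - 1) \sum_{z \in \SL_2(\Z) \backslash \bbH} \frac{\mathrm{ord}_z(f)}{\omega_z} E_{1,0}(z, \bar s),
\]
where I use $\overline{\log|f|} = \log|f|$ to remove one conjugate inside the regularized inner product. Cancelling the factor $\bar s - 1$ and specializing to real $s$ (where $\bar s = s$) recovers the claimed identity on the real axis; since both sides are meromorphic in the complex parameter, analytic continuation extends it to all $s$ for which the expressions are defined. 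I expect the main obstacle to be the careful verification that both boundary limits vanish: this hinges on the normalization $f(i\infty) = 1$ to kill the constant Fourier terms of $\log|f|$ and $\mathcal{D}_0 f$, so that the polynomial-growth constant-in-$u$ pieces of the Eisenstein series are suppressed, and it requires delicate bookkeeping of the complex-conjugation conventions introduced by $\xi_k$.
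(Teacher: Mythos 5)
Your proposal is correct and follows essentially the same route as the paper: apply \cref{thm:main} with $g(\tau)=E_{1,2}(\tau,s)$, use $f(i\infty)=1$ together with \cref{lem:log-Fourier} and \cref{lem:Dk-Fourier} to kill the constant-in-$u$ terms so that both $C_{1,i\infty}$ and $C_{2,i\infty}$ vanish, and then convert via \eqref{eq:MPoincare-xi}. The only slip is that $\xi_2$ is antilinear, so $\xi_2\Delta_2 g=\overline{s(1-s)}\,(\bar s-1)E_{1,0}(\cdot,\bar s)$ rather than $s(1-s)(\bar s-1)E_{1,0}(\cdot,\bar s)$; this is harmless because your restriction to real $s$ followed by analytic continuation (or simply replacing $\bar s$ by $s$ in the corrected identity) yields the stated formula either way.
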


\begin{proof}
In this case, the only cusp is $i\infty$. By our assumption and \cref{lem:Dk-Fourier}, we have
\[
	C_{1,i\infty}(f,g) = -2\pi \lim_{y \to \infty} \int_{iy}^{1+iy} \left(\sum_{m=1}^\infty a_m q^m\right) \cdot (\overline{s}-1) E_{1,0}(\tau, \overline{s}) \dd \tau = 0.
\]
Similarly, by \cref{lem:log-Fourier}, 
\[
	C_{2,i\infty}(f,g) = \lim_{y \to \infty} \int_{iy}^{1+iy} \left(\sum_{m \neq 0} a_m(v) e(mu)\right) \cdot s(1-s) E_{1,2}(\tau,s) \dd \tau = 0.
\]
Therefore, we obtain
\[
	\langle \xi_2 \Delta_2 E_{1,2}(\cdot, s), \log |f| \rangle^{\mathrm{reg}} = -2\pi \sum_{z \in \SL_2(\Z) \backslash \bbH} \frac{\mathrm{ord}_z(f)}{\omega_z} \xi_2 E_{1,2}(z, s),
\]
which implies the desired result by \eqref{eq:MPoincare-xi}.
\end{proof}

We now briefly review the result on twisted traces of CM values. For a negative discriminant $d < 0$ such that $d \equiv 0, 1 \pmod{4}$, we let $\mathcal{Q}_d$ denote the set of positive definite integral binary quadratic forms $Q(X,Y) = AX^2 + BXY + CY^2$. As is classically known, the group $\SL_2(\Z)$ acts on this set by
\[
	\left(Q \circ \pmat{a & b \\ c & d}\right) (X,Y) = Q(aX+bY, cX+dY),
\]
and the number of classes under this action is finite. For each $Q \in \mathcal{Q}_d$, we define $\alpha_Q \in \bbH$ as the unique root in $\bbH$ of $Q(\tau,1) = 0$, and set $\omega_Q := \# \mathrm{PSL}_2(\Z)_Q = \omega_{\alpha_Q}$. 

For a pair of a fundamental discriminant $D > 1$ and a discriminant $d < 0$ (not necessarily fundamental), we define a genus character $\chi_D : \mathcal{Q}_{dD}/\SL_2(\Z) \to \{-1, 0, 1\}$ by
\[
	\chi_D(Q) = \begin{cases}
		\left(\frac{D}{r}\right) &\text{if } (A,B,C,D) = 1, \text{ where $Q$ represents $r$ and $(r,D) = 1$},\\
		0 &\text{otherwise},
	\end{cases}
\]
where $\left(\frac{\cdot}{\cdot}\right)$ is the Kronecker symbol. For a weight 0 modular form $F$, the twisted trace of CM values is defined by
\[
	\mathrm{Tr}_{d,D}(F) \coloneqq \sum_{Q \in \mathcal{Q}_{dD}/\SL_2(\Z)} \frac{\chi_D(Q)}{\omega_Q} F(\alpha_Q).
\]
In particular, when $F(\tau)$ is the Eisenstein series $E_{1,0}(\tau,s)$, Duke--Imamo\={g}lu--T\'{o}th~\cite{DukeImamogluToth2011} obtained the following decomposition formula. It follows from the combination of (2.25) and (5.2) in their article.

\begin{theorem}\label{thm:DIT}
For fundamental discriminants $D > 1, D' < 0$ and $d = D' m^2$, we have
\[
	\mathrm{Tr}_{d,D}(E_{1,0}(\cdot,s)) = \frac{|DD'|^{s/2}}{2^s} \frac{L_D(s) L_{D'}(s)}{\zeta(2s)} \sum_{n \mid m} \mu \left(\frac{m}{n}\right) \left(\frac{D'}{m/n}\right) n^{1-s} \sigma_{2s-1}(n),
\]
where $L_D(s) = \sum_{n=1}^\infty \left(\frac{D}{n}\right) n^{-s}$ is the Dirichlet $L$-function and $\mu(n)$ is the M\"{o}bius function.
\end{theorem}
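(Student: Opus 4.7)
The plan is to realize the twisted trace $\mathrm{Tr}_{d,D}(E_{1,0}(\cdot,s))$ on the right-hand side of \cref{cor:Eisen-case}, by choosing $f$ to be a generalized Borcherds product whose divisor in $\SL_2(\Z)\backslash \bbH$ is exactly the twisted Heegner divisor $\sum_{Q\in\mathcal{Q}_{dD}/\SL_2(\Z)} \chi_D(Q)\alpha_Q/\omega_Q$. Concretely, by Bruinier--Ono's twisted generalized Borcherds product construction (combined with the choice of an appropriate weight $1/2$ harmonic Maass form input whose principal part is supported in the $d$-th Fourier coefficient), one obtains a weight $0$ meromorphic modular function $\Psi_{d,D}$ on $\SL_2(\Z)$, normalized so that $\Psi_{d,D}(i\infty)=1$, with the prescribed twisted divisor. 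Applying \cref{cor:Eisen-case} then gives the clean identity
\[
	\mathrm{Tr}_{d,D}(E_{1,0}(\cdot,s)) \;=\; -\frac{s(1-s)}{2\pi}\, \bigl\langle E_{1,0}(\cdot,s),\, \log|\Psi_{d,D}|\bigr\rangle^{\mathrm{reg}},
\]
which moves the entire content of the theorem to evaluating one regularized Petersson inner product.

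Next I would compute this inner product by the standard unfolding / Rankin--Selberg method. For $\Re(s)$ large, $E_{1,0}(\tau,s)$ is a Poincar\'{e} series over $\Gamma_\infty\backslash \SL_2(\Z)$, so unfolding against $\log|\Psi_{d,D}|$ collapses the inner product to a Mellin transform of the constant term of $\log|\Psi_{d,D}|$ along the cusp. The Fourier expansion of $\log|\Psi_{d,D}|$ at $i\infty$ is provided explicitly by the Borcherds product formula: up to a harmless logarithmic term the non-constant Fourier coefficients are indexed by pairs $(m,Q)$ with $Q\in\mathcal{Q}_{dD}$, so the Mellin transform is a Dirichlet series in $s$ that, after the usual Eulerization via the genus character $\chi_D$ and the decomposition $d=D'm^2$, factors as
\[
	\frac{|DD'|^{s/2}}{2^s}\,\frac{L_D(s)\,L_{D'}(s)}{\zeta(2s)} \cdot \sum_{n\mid m}\mu\!\left(\tfrac{m}{n}\right)\!\left(\tfrac{D'}{m/n}\right)\! n^{1-s}\sigma_{2s-1}(n).
\]
The product $L_D(s)L_{D'}(s)/\zeta(2s)$ arises because twisting the theta kernel by $\chi_D$ splits its Rankin--Selberg zeta function into two quadratic $L$-functions, and the divisor sum over $n\mid m$ arises by M\"{o}bius inversion from the decomposition of non-primitive binary forms of discriminant $D'm^2$ into orbits of the $T_n$-action on primitive ones (equivalently, from rewriting $\mathrm{Tr}_{D'm^2,D}$ via Hecke operators acting on $\mathrm{Tr}_{D',D}$, using self-adjointness of $T_n$ and the Hecke eigenvalue $n^{1-s}\sigma_{2s-1}(n)$ of $E_{1,0}(\cdot,s)$).

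Two points are delicate and constitute the main obstacle. First, the regularization: since $E_{1,0}(\tau,s)$ has a first-order pole at $s=1$ and $\log|\Psi_{d,D}|$ grows logarithmically at the cusp, the unfolding must be carried out with the Zagier-type regularization implicit in \cref{def:reg-inner}, and one has to verify that the formal Mellin identity holds after analytic continuation in $s$. Second, pinning down the precise archimedean constant $|DD'|^{s/2}/2^s$ requires carefully tracking the local factors in the Fourier expansion of the Borcherds product and the normalization of $\Psi_{d,D}$ at $i\infty$; this is where the explicit machinery of generalized Borcherds products does the essential work, and is the step where one must invoke the cited Borcherds/Bruinier--Ono formulas rather than reproving them. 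Once these two points are carried out, combining them with \cref{cor:Eisen-case} yields exactly the formula in \cref{thm:DIT}.
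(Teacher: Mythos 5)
Your first step coincides with the paper's: take Zagier's $f_d$, form the generalized Borcherds product $\Psi_D(f_d)=\prod_Q (j(\tau)-j(\alpha_Q))^{\chi_D(Q)/\omega_Q}$ with $\Psi_D(f_d)(i\infty)=1$, and apply \cref{cor:Eisen-case} to get $\mathrm{Tr}_{d,D}(E_{1,0}(\cdot,s))=-\frac{s(1-s)}{2\pi}\langle E_{1,0}(\cdot,s),\log|\Psi_D(f_d)|\rangle^{\mathrm{reg}}$. From there, however, your main route --- unfold $E_{1,0}(\tau,s)$ over $\Gamma_\infty\backslash\SL_2(\Z)$ and evaluate the Mellin transform of the constant term of $\log|\Psi_D(f_d)|$ read off from the product formula --- has a genuine gap. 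The Borcherds product expansion converges only for $v$ large (above all the CM points $\alpha_Q$), and there the constant Fourier coefficient of $\log|\Psi_D(f_d)|$ is simply $0$ because $\Psi_D(f_d)(i\infty)=1$ and $\mathrm{ord}_{i\infty}=0$; the unfolded integral $\int_0^\infty v^{s-2}\bigl(\int_0^1\log|\Psi_D(f_d)(u+iv)|\,\dd u\bigr)\dd v$ instead sees the $u$-average for \emph{all} $v>0$, and by Jensen's formula that average jumps each time $v$ crosses the height of a zero or pole. Extracting the Dirichlet series from those jumps amounts to summing an Epstein-type zeta function over the forms in $\mathcal{Q}_{dD}$ --- i.e.\ exactly the intricate Ibukiyama--Saito computation the paper is designed to bypass. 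As written, your Mellin step would return $0$, not the claimed $L$-function product, and you also never supply the base case $m=1$ (Kronecker's formula), which cannot come out of a vanishing integral.

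The correct mechanism is the one you relegate to a parenthesis: a Hecke recursion in $m$. The paper's proof uses the weight-$1/2$ relation $pT(p^2)f_d=pf_{d/p^2}+\bigl(\tfrac{d}{p}\bigr)f_d+f_{dp^2}$, and --- this is the ingredient missing from your sketch --- the Hecke equivariance of the generalized Borcherds lift, $\Psi_D(pT(p^2)f_d)=\mathcal{T}(p)\Psi_D(f_d)$ for $p\nmid D$, which converts the half-integral-weight Hecke action on the input into $\log|\Psi_D(pT(p^2)f_d)|=pT_p\log|\Psi_D(f_d)|$. Self-adjointness of $T_p$ for the regularized pairing and the eigenvalue $T_pE_{1,0}(\cdot,s)=(p^{-s}+p^{s-1})E_{1,0}(\cdot,s)$ then give the recursion $\mathrm{Tr}_{dp^2,D}=\bigl(p^{1-s}+p^s-\bigl(\tfrac{d}{p}\bigr)\bigr)\mathrm{Tr}_{d,D}-p\,\mathrm{Tr}_{d/p^2,D}$, whose solution is the divisor sum $M_{D'}(m,s)$ times the classical Kronecker evaluation of $\mathrm{Tr}_{D',D}$, which is quoted rather than reproved. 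Note also that the equivariance step forces the hypothesis $(m,D)=1$, under which the paper proves the theorem; your proposal does not record this restriction. Without the equivariance identity, ``rewriting $\mathrm{Tr}_{D'm^2,D}$ via Hecke operators acting on $\mathrm{Tr}_{D',D}$'' is precisely the nontrivial content, not an equivalent reformulation.
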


The case $m=1$ is a classical result by Kronecker (see~\cite{DukeImamogluToth2018}), while the case $m>1$ was proved by Duke--Imamo\={g}lu--T\'{o}th. Their proof involves somewhat intricate calculations by Ibukiyama--Saito~\cite[Section 2]{IbukiyamaSaito2012}. Recently, Kaneko--Mizuno~\cite[Theorem 1]{KanekoMizuno2020} presented a claim that seems to be equivalent to the above formula, although their proof also requires highly complex case distinctions. Notably, their results apply not only to the CM values but also yield similar results for the twisted trace of cycle integrals in the case where $dD>0$. Furthermore, Ibukiyama~\cite{Ibukiyama2023} has provided a more recent alternative proof using an adelic approach. In comparison, the following proof using \cref{cor:Eisen-case} is more concise. Additionally, it generalizes the proof of a special case using Hecke equivariance from earlier work of the first three authors~\cite{JeonKangKim2024}. However, it is important to emphasize that the additional condition $(D,m) = 1$ is necessary in places where results regarding Hecke equivariance are applied. We provide an alternative proof of \cref{thm:DIT} under the assumption that $(m,D) = 1$.

\begin{proof}
First, we can show that the function
\[
	M_{d}(m,s) \coloneqq \sum_{n \mid m} \mu \left(\frac{m}{n}\right) \left(\frac{d}{m/n}\right) n^{1-s} \sigma_{2s-1}(n)
\]
satisfies the following.
\begin{align}\label{eq:M-prop}
	\begin{cases}
		M_{D'\ell^2}(m,s) = M_{D'}(m,s) &\text{if } (\ell, m) = 1,\\
		M_{D'}(mn,s) = M_{D'}(m,s) M_{D'}(n,s) &\text{if } (m,n)=1,\\
		M_{D'}(p^{r+1},s) = (p^{1-s} + p^s) M_{D'}(p^r,s) - p M_{D'}(p^{r-1},s) &\text{if $p$ is prime}.
	\end{cases}
\end{align}
The first and second claims immediately follows from the multiplicativity of the functions in the definition. The third claim can be easily verified by using
\[
	M_{D'}(p^r,s) = p^{r(1-s)} \frac{1 - p^{(r+1)(2s-1)}}{1-p^{2s-1}} - \left(\frac{D'}{p}\right) p^{(r-1)(1-s)} \frac{1-p^{r(2s-1)}}{1-p^{2s-1}}.
\]

For a negative discriminant $d$, let $f_d(\tau)$ denote the weakly holomorphic modular form of weight $1/2$ on $\Gamma_0(4)$ that satisfies Kohnen's plus condition, given by the form 
\[
	f_d(\tau) = q^d + \sum_{0 < n \equiv 0, 1\ (4)} c_d(n) q^n,
\] 
as studied by Zagier~\cite[Section 5]{Zagier2002}. Moreover, the image of this function under the generalized Borcherds product $\Psi_D$ has been studied by Bruinier--Ono~\cite[Section 8]{BruinierOno2010}, and it is known that
\[
	\Psi_D(f_d) = \prod_{n=1}^\infty \prod_{b\ (D)} \left(1 - e\left(n\tau + \frac{b}{D}\right)\right)^{\left(\frac{D}{b}\right) c_d(Dn^2)} = \prod_{Q \in \mathcal{Q}_{dD}/\SL_2(\Z)} (j(\tau) - j(\alpha_Q))^{\frac{\chi_D(Q)}{w_Q}}
\]
is a weight 0 meromorphic modular form with $\Psi_D(f_d)(i\infty) = 1$, making \cref{cor:Eisen-case} applicable. Therefore, we have
\begin{align}\label{eq:Tr-Borcherds}
	\mathrm{Tr}_{d,D}(E_{1,0}(\cdot,s)) = \sum_{z \in \SL_2(\Z) \backslash \bbH} \frac{\mathrm{ord}_z(\Psi_D(f_d))}{\omega_z} E_{1,0}(z,s) = -\frac{s(1-s)}{2\pi} \langle E_{1,0}(\cdot, s), \log |\Psi_D(f_d)| \rangle^{\mathrm{reg}}.
\end{align}

For a prime $p$, let $T(p^2)$ be the Hecke operator acting on the space of weight $1/2$ modular forms, (see~\cite[Chapter IV, Section 3]{Koblitz1993}), given by
\[
	p T(p^2) \left(\sum_{n \gg -\infty} c(n) q^n\right) = \sum_{n \gg -\infty} \left(pc(p^2n) + \left(\frac{n}{p}\right) c(n) + c \left(\frac{n}{p^2}\right)\right) q^n.
\]
As shown by Zagier~\cite{Zagier2002}, by comparing the principal parts, it can be seen that
\[
	pT(p^2) f_d = pf_{d/p^2} + \left(\frac{d}{p}\right) f_d + f_{dp^2},
\]
where $f_{d/p^2}(\tau) = 0$ if $d \not\equiv 0 \pmod{p^2}$. This implies the equation concerning the order of the Borcherds products:
\[
	\mathrm{ord}_z(\Psi_D(f_{dp^2})) = \mathrm{ord}_z(\Psi_D(pT(p^2) f_d)) - \left(\frac{d}{p}\right) \mathrm{ord}_z(\Psi_D(f_d)) - p \cdot \mathrm{ord}_z (\Psi_D(f_{d/p^2})),
\]
where we use the property $\Psi_D(f+g) = \Psi_D(f) \Psi_D(g)$ that follows directly from the definition (see~\cite{BruinierOno2010}). Combining it with \eqref{eq:Tr-Borcherds} leads to
\begin{align}\label{eq:Tr-Eisen-middle}
\begin{split}
	&\mathrm{Tr}_{dp^2, D}(E_{1,0}(\cdot, s))\\
	&= -\frac{s(1-s)}{2\pi} \langle E_{1,0}(\cdot, s), \log |\Psi_D(pT(p^2) f_d)| \rangle^{\mathrm{reg}} - \left(\frac{d}{p}\right) \mathrm{Tr}_{d,D}(E_{1,0}(\cdot,s)) - p \mathrm{Tr}_{d/p^2,D}(E_{1,0}(\cdot,s)).
\end{split}
\end{align}

To compute the first term on the right-hand side of \eqref{eq:Tr-Eisen-middle}, we recall Guerzhoy's result~\cite{Guerzhoy2006} on Hecke equivariance and the generalization by the first three authors~\cite{JeonKangKim2023}, which we use here. For $f(\tau)$ with leading coefficient 1, the multiplicative Hecke operator $\mathcal{T}(p)$ of weight 0 is defined by
\[
	\mathcal{T}(p) f(\tau) = \prod_{\substack{a, d > 0 \\ ad = p}} \prod_{b\ (d)} f \left(\frac{a\tau+b}{d}\right),
\]
compared to the usual Hecke operator $T_p$ of weight 0 defined by
\[
	T_p f(\tau) = \frac{1}{p} \sum_{\substack{a, d > 0 \\ ad = p}} \sum_{b\ (d)} f \left(\frac{a\tau+b}{d}\right).
\]
It is shown in \cite[Theorem 3.1]{JeonKangKim2023} that if $p$ does not divide $D$, then
\[
	\Psi_D(pT(p^2)f_d) = \mathcal{T}(p) \Psi_D(f_d)
\]
holds. From the definitions of Hecke operators, we can derive
\[
	\log |\Psi_D(pT(p^2)f_d)| = pT_p \log |\Psi_D(f_d)|.
\]

Therefore, the first term of \eqref{eq:Tr-Eisen-middle} is calculated as
\begin{align*}
	-\frac{s(1-s)}{2\pi} \langle E_{1,0}(\cdot, s), \log |\Psi_D(pT(p^2) f_d)| \rangle^{\mathrm{reg}} &= -\frac{s(1-s)}{2\pi} p \langle E_{1,0}(\cdot, s), T_p \log |\Psi_D(f_d)| \rangle^{\mathrm{reg}}\\
		&= -\frac{s(1-s)}{2\pi} p \langle T_p E_{1,0}(\cdot, s), \log |\Psi_D(f_d)| \rangle^{\mathrm{reg}}\\
		&= -\frac{s(1-s)}{2\pi} p (p^{-s} + p^{s-1}) \langle E_{1,0}(\cdot, s), \log |\Psi_D(f_d)| \rangle^{\mathrm{reg}}\\
		&= (p^{1-s} + p^s) \mathrm{Tr}_{d,D}(E_{1,0}(\tau,s)).
\end{align*}
Here, the second equality follows from the fact that the Hecke operator $T_p$ is self-adjoint for the regularized Petersson inner product. The third equality uses the formula 
\[
	T_p E_{1,0}(\tau,s) = \frac{1}{p} \sum_{\substack{a, d > 0 \\ ad=p}} \sum_{b\ (d)} E_{1,0} \left(\frac{a\tau+b}{d}, s \right) = (p^{-s} + p^{s-1}) E_{1,0}(\tau,s),
\]
which can be shown in the same manner as in Serre~\cite[Chapter VII, 5.5]{Serre1973}. Thus, we obtain from \eqref{eq:Tr-Eisen-middle} that
\[
	\mathrm{Tr}_{dp^2, D}(E_{1,0}(\cdot,s)) = \left(p^{1-s} + p^s - \left(\frac{d}{p}\right) \right) \mathrm{Tr}_{d,D}(E_{1,0}(\cdot,s)) - p \mathrm{Tr}_{d/p^2, D}(E_{1,0}(\cdot,s)),
\]
which implies, for $(d,p) = 1$, 
\begin{align}\label{eq:Tr-rec}
\begin{split}
	\mathrm{Tr}_{dp^{2r}, D}(E_{1,0}(\cdot,s)) &= \left(p^{r(1-s)} \sigma_{2s-1}(p^r) - \left(\frac{d}{p}\right) p^{(r-1)(1-s)} \sigma_{2s-1}(p^{r-1}) \right) \mathrm{Tr}_{d,D}(E_{1,0}(\cdot,s))\\
		&= M_d(p^r, s) \mathrm{Tr}_{d,D}(E_{1,0}(\cdot,s))
\end{split}
\end{align}
by induction and the third property in \eqref{eq:M-prop}. 

In conclusion, for $d = D' m^2$ with $(m,D) = 1$, we have
\[
	\mathrm{Tr}_{d, D}(E_{1,0}(\cdot,s)) = \mathrm{Tr}_{D',D}(E_{1,0}(\cdot,s)) M_{D'}(m,s).
\]
This can be verified as follows: For the prime factorization of $m = \prod_{j=1}^r p_j^{\epsilon_j} = p_1^{\epsilon_1} m'$, applying \eqref{eq:Tr-rec} once yields
\[
	\mathrm{Tr}_{D' m'^2 p_1^{2\epsilon_1}, D}(E_{1,0}(\cdot, s)) = M_{D'm'^2}(p_1^{\epsilon_1}, s) \mathrm{Tr}_{D'm'^2, D}(E_{1,0}(\cdot, s)).
\]
Since $(m', p_1) = 1$, by the first property in \eqref{eq:M-prop}, this equals $M_{D'}(p_1^{\epsilon_1}, s) \mathrm{Tr}_{D'm'^2, D}(E_{1,0}(\cdot, s))$. By repeatedly applying this argument, we obtain
\[
	\mathrm{Tr}_{D' m^2, D}(E_{1,0}(\cdot, s)) = \mathrm{Tr}_{D', D}(E_{1,0}(\cdot, s)) \cdot \prod_{j=1}^r M_{D'}(p_j^{\epsilon_j}, s).
\]
Finally, applying the second property in \eqref{eq:M-prop} concludes the proof.
\end{proof}

Comparing the constant term of both sides of \cref{thm:DIT} at $s=1$ leads to \cite[Corollary 1.4]{Matsusaka2019} and its generalization~\cite[Corollary 1.4]{JeonKangKim2024}, while the higher coefficients at $s=1$ provide similar results for polyharmonic Maass forms studied in~\cite{LagariasRhoades2016} and \cite{Matsusaka2020}.

\bibliographystyle{amsalpha}
\bibliography{References} 

\end{document}